\newtheorem{theorem}{Theorem}[section]
\newtheorem{lemma}[theorem]{Lemma}
\newtheorem{proposition}[theorem]{Proposition}
\newtheorem{corollary}[theorem]{Corollary}
\newtheorem{remark}[theorem]{Remark}
\newcommand{\diam}{{\rm diam}}
\newcommand{\n}{{\rm n}}
\begin{document}

\title{Total mutual-visibility in graphs with emphasis on lexicographic and Cartesian products}

\author{
Dorota Kuziak $^{a}$\thanks{Email: \texttt{dorota.kuziak@uca.es}}
\and
Juan A. Rodr\'iguez-Vel\'azquez $^{b}$\thanks{Email: \texttt{juanalberto.rodriguez@urv.cat}}
}

\date{}

\maketitle
\begin{center}
	$^a$ Departamento de Estad\'istica e Investigaci\'on Operativa, Universidad de C\'adiz, Spain \\
	\medskip

	$^b$ Departament d'Enginyeria Inform\`atica i Matem\`atiques, Universitat Rovira i Virgili, Spain
	\medskip
\end{center}

\begin{abstract}
Given a connected graph $G$, the total mutual-visibility number of $G$, denoted $\mu_t(G)$, is the cardinality of a largest set $S\subseteq V(G)$ such that for every pair of vertices $x,y\in V(G)$ there is a shortest $x,y$-path whose interior vertices are not contained in $S$. Several combinatorial properties, including bounds and closed formulae, for $\mu_t(G)$ are given in this article.
Specifically, we give several bounds for $\mu_t(G)$ in terms of the diameter, order and/or connected domination number of $G$ and show characterizations of the graphs achieving the limit values of some of these bounds. We also consider those vertices of a graph $G$ that either belong to every total mutual-visibility set of $G$ or does not belong to any of such sets, and deduce some consequences of these results. We determine the exact value of the total mutual-visibility number of lexicographic products in terms of the orders of the factors, and the total mutual-visibility number of the first factor in the product. Finally, we give some bounds and closed formulae for the total mutual-visibility number of Cartesian product graphs.
\end{abstract}

\noindent
{\bf Keywords}: Total mutual-visibility number; total mutual-visibility set; mutual-visibility; lexicographic product; Cartesian product  \\

\noindent
{\bf AMS Subj.\ Class.\ (2020)}: 05C12, 05C76

\section{Introduction}
\label{sec:intro}

Vertex visibility in networks is a topic that has motivated a significant number of investigations in the last few years. These investigations have been taken into account from two different points of view. In one hand, the research has been conducted in practical problems appearing in the area of computer science, through the study of some robot navigation models, mainly focused on the visibility required for some robots to move around a network in order to avoid collisions. For some examples of researches on this topic we suggest for instance \cite{aljohani-2018a,bhagat-2020,diluna-2017}. In a second hand, a theoretical  point of view has been considered, where the investigation mainly concentrates the attention into finding combinatorial properties of ``visible'' sets in networks. The first ideas on this direction were presented in \cite{diStefano-2022}, where the concept of mutual-visibility number in graphs was introduced. This latter style of theoretical study was extended in \cite{cicerone-2023,cicerone-2022+}.

Let $G = (V(G), E(G))$ be a connected and undirected graph, $X\subseteq V(G)$, and $x,y\in X$. If there exists a shortest $x,y$-path (also called geodesic) whose internal vertices are all not in $X$, then $x$ and $y$ are $X$-\emph{visible}. The set $X$ is a \emph{mutual-visibility set} of $G$, if every two vertices $x$ and $y$ of $X$ are $X$-visible. The cardinality of the largest mutual-visibility set of $G$ is the \emph{mutual-visibility number} of $G$ denoted by $\mu(G)$.

The topic of mutual-visibility in graphs is closely related to the general position problem in graphs, which was formally, independently and recently defined in \cite{klavzar-2018,ullas-2016}, although its notion is already known from previous investigations, like for instance \cite{korner-1995}, where the concept was considered only for hypercubes. A general position set in a graph $G$ can be understood as a mutual-visibility set in $G$ in which any two vertices of such set are ``visible'' not only through at least one shortest path but through every possible shortest path between the two vertices. The general position problem has been intensively studied in the last 5 years, and by now there are many ongoing investigations on this topic and its variations. For some significant cases, we suggest some of the most recent ones \cite{ghorbani-2021,klavzar-2021-b,klavzar-2021,patkos-2020,tian-2021a,tian-2021}.

In order to give more insight into the mutual-visibility number of strong product graphs, Cicerone \emph{et al.} \cite{cicerone-2022+} introduced the notion of total mutual-visibility as a natural extension of the mutual-visibility, which can also be seen in a computer setting navigation model, where not only the robots are required to be ``visible'' with respect to themselves, but also the remaining nodes of the networks have a similar property among them, and with respect to the navigating robots. This setting is clearly more restrictive, but it surprisingly turns to become very useful while considering some networks having some Cartesian properties in the vertex set, namely, that ones of product-like structures, when a product is understood in the sense of the four classical graph products as defined in the book \cite{hammack-2011}.

As it happens, the concept of total mutual-visibility might be also of independent interest, as already pointed out in \cite{cicerone-2022+}, since the visibility is extended to all vertices in the graph, not only for the vertices from mutual-visibility set. This is clearly a property of independent interest, and its study is worthy of being continued. This was indeed already done in \cite{tian-2022+}, and it is our goal to continue finding more contributions on this regard. Formally, $X\subseteq V(G)$ is a \emph{total mutual-visibility set} of $G$, if every two vertices $x$ and $y$ of $G$ are $X$-visible. A largest total mutual-visibility set of $G$ is a \emph{$\mu_t(G)$-set}, its cardinality is the \emph{total mutual-visibility number} of $G$ denoted by $\mu_t(G)$.

We now give some basic terminology and basic definitions that shall be used through our whole exposition. Clearly, we continue considering here only connected and undirected graphs. Given a graph $G$ and two vertices $x,y\in V(G)$, the \emph{distance} $d_G(x,y)$ between $x$ and $y$ in $G$ is the length of a shortest $x,y$-path. The \emph{diameter} $\diam(G)$ of $G$ is the largest distance between pairs of vertices of $G$. The subgraph of $G$ induced by $S\subseteq V(G)$ will be denoted by $G[S]$ and the complement of a graph $G$ is $\overline{G}$. A subgraph $H$ of $G$ is \emph{convex} if for each two vertices $x,y\in V(H)$, all shortest $x,y$-paths in $G$ lie completely in $H$. As usual, the \emph{domination number} of $G$ is denoted by $\gamma(G)$, which is the cardinality of a smallest set such that any vertex not in the set is adjacent to at least one vertex of such set. By $n_1(G)$ we denote the number of vertices of degree one in $G$, also known as the number of \emph{leaves} of $G$, when $G$ is a tree. We next continue with some extra information (basic results) that we would need for our purposes.

We first recall that there exist graphs where $X=\varnothing$ is the only $\mu_t(G)$-set, like the case of cycles of order at least $5$, and graphs where  $X=V(G)$ is the only $\mu_t(G)$-set, like the case of complete graphs. Thus, for any graph $G$, $0 \le \mu_t(G)\le n(G)$, where $n(G)$ denotes the order of $G$. Also, the following observation from \cite{tian-2022+} is of interest.

\begin{proposition}{\em \cite{tian-2022+}}
If $X\subseteq V(G)$ is a total mutual-visibility set of a graph $G$ and $Y\subseteq  X$, then $Y$ is also a total mutual-visibility set of $G$.
\end{proposition}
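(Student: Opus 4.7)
The statement is a monotonicity/hereditary property, and the plan is to prove it directly from the definition in essentially one move. Let $X$ be a total mutual-visibility set of $G$ and fix an arbitrary subset $Y\subseteq X$. To show that $Y$ is a total mutual-visibility set, I need to verify that for every pair $x,y\in V(G)$ the two vertices are $Y$-visible.

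Pick any such pair $x,y\in V(G)$. By hypothesis, $X$ is total mutual-visibility, so there exists a shortest $x,y$-path $P$ in $G$ whose set of internal vertices $I(P)$ satisfies $I(P)\cap X=\varnothing$. The key (and only) observation is the contrapositive of inclusion: since $Y\subseteq X$, any vertex that is outside $X$ is automatically outside $Y$, i.e.\ $V(G)\setminus X\subseteq V(G)\setminus Y$. Therefore $I(P)\cap Y\subseteq I(P)\cap X=\varnothing$, so the very same geodesic $P$ witnesses that $x$ and $y$ are $Y$-visible.

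Since $x,y$ were arbitrary, every pair of vertices of $G$ is $Y$-visible, which is precisely the definition of $Y$ being a total mutual-visibility set of $G$.

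There is no genuine obstacle here: the result is an immediate structural consequence of the fact that the visibility condition is phrased as ``some geodesic avoids the forbidden set,'' and shrinking the forbidden set can only make such a geodesic easier to find — the witness path carries over verbatim. The only thing to be careful about in the write-up is to treat the degenerate cases ($Y=\varnothing$, or $x=y$, or $xy\in E(G)$ where $I(P)=\varnothing$) correctly, but these are all trivially fine since the empty set has empty intersection with any set.
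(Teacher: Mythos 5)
Your proof is correct; the paper cites this proposition from \cite{tian-2022+} without reproducing a proof, and your argument is exactly the standard one: the geodesic witnessing $X$-visibility has internal vertices disjoint from $X$, hence from any $Y\subseteq X$, so the same path witnesses $Y$-visibility. Nothing is missing.
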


To close this section we present the plan of our article. In Section \ref{sec:tmv}, several bounds for $\mu_t(G)$ in terms of the diameter, order and/or connected domination number of $G$ are given. We also show characterizations of the graphs achieving the limit values of some of these bounds, and present some consequences that gives the exact value of $\mu_t(G)$ when $G$ is a join or a corona graph. Section \ref{Section:Compulsory and forbidden vertices} is dedicated to consider those vertices of a graph $G$ that either belong to every total mutual-visibility set of $G$ or does not belong to any of such sets. In Section \ref{sec:lexicographic-product} we consider the lexicographic product of graphs $G$ and $H$, and compute the exact value of its total mutual-visibility number in terms of the orders of $G$ and $H$ and the total mutual-visibility number of $G$. In Section \ref{sec:tmv-Cartesian}, we give some bounds and closed formulae for the total mutual-visibility number of Cartesian product graphs. We determine such value for some specific families of graphs that are generalizing some other results recently presented in \cite{tian-2022+}. Finally, in the concluding section some open problems and directions for further investigation are indicated.

\section{General bounds and consequences}
\label{sec:tmv}

Our first contribution describes a relationship between the total mutual-visibility number and the diameter of a graph.

\begin{theorem}
\label{MainBound}
For any connected graph $G$,
$$0\le \mu_t(G)\le \n(G)-\diam(G)+1.$$
\end{theorem}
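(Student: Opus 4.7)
The lower bound $0 \le \mu_t(G)$ is immediate from the definition, so the content is in the upper bound. My plan is to exhibit $\diam(G)-1$ vertices that must be excluded from any total mutual-visibility set, by exploiting a diametral pair.

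Let $d = \diam(G)$ and pick vertices $u, v \in V(G)$ with $d_G(u,v) = d$. Let $S \subseteq V(G)$ be an arbitrary total mutual-visibility set of $G$. Since $u$ and $v$ must be $S$-visible, by definition there exists a shortest $u,v$-path $P \colon u = w_0, w_1, \ldots, w_d = v$ whose \emph{internal} vertices $w_1, \ldots, w_{d-1}$ all lie outside $S$. Since $P$ is a geodesic, these $d-1$ internal vertices are pairwise distinct and distinct from $u$ and $v$, so they contribute $d-1$ vertices of $V(G) \setminus S$. Therefore
\[
|S| \;\le\; \n(G) - (d-1) \;=\; \n(G) - \diam(G) + 1,
\]
and taking the maximum over all total mutual-visibility sets $S$ gives the claimed bound.

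There is essentially no obstacle here: the argument is a single application of the definition of total mutual-visibility to a diametral pair, using only the elementary fact that a geodesic of length $d$ has exactly $d-1$ internal vertices, all distinct from its endpoints. The only point requiring a moment's care is the observation that the definition of $S$-visibility for the pair $\{u,v\}$ does not forbid $u$ or $v$ themselves from lying in $S$; it only forces the \emph{internal} vertices of some geodesic between them to avoid $S$, which is precisely what is needed to count $d-1$ vertices outside $S$.
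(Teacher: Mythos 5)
Your proof is correct and follows essentially the same argument as the paper: take a diametral pair, apply the definition of total mutual-visibility to obtain a geodesic whose $\diam(G)-1$ internal vertices avoid the set, and count. No issues.
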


\begin{proof}
By definition of total mutual-visibility number, $\mu_t(G)\ge 0$. Now, let $X$ be a $\mu_t(G)$-set. If $u,v\in V(G)$ are two diametral vertices, then there exists a diametral path $u=x_0,\dots, x_k=v$ such that $\{x_{1},\dots, x_{k-1}\}\cap X=\varnothing$. Therefore,
$\mu_t(G)=|X|\le \n(G)-(k-1)=\n(G)-\diam(G)+1.$
\end{proof}

All graphs with $\mu_t(G)=0$ were characterized in \cite{tian-2022+}. Next we consider the case of graphs with $\mu_t(G)= \n(G)-\diam(G)+1.$

\begin{proposition}
\label{BoundDiameterCharact}
Given a  connected graph $G$ of order $\n(G)\ge 2$, the following statements are equivalent.
\begin{enumerate}[{\rm (i)}]
\item $\mu_t(G)= \n(G)-\diam(G)+1.$
\item There exists a diametral path $x_0,\dots, x_k$ such that for every pair $u,v$ of vertices of $G$ there exists a shortest path $u=y_0,\dots, y_{k'}=v$ such that $\{y_1,\dots, y_{k'-1}\}\subseteq \{x_1,\dots,x_{k-1}\}$.
\end{enumerate}
\end{proposition}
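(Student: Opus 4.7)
The plan is to prove the two implications separately, with the key observation that when equality holds in Theorem \ref{MainBound}, the complement $V(G)\setminus X$ of a $\mu_t(G)$-set $X$ is forced to coincide with the interior of some diametral path.

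For the easy direction (ii) $\Rightarrow$ (i), I would let $x_0,\dots,x_k$ be the diametral path given by (ii) and set $X:=V(G)\setminus\{x_1,\dots,x_{k-1}\}$, so that $|X|=\n(G)-(k-1)=\n(G)-\diam(G)+1$. Condition (ii) says that every pair of vertices of $G$ admits a shortest path whose interior lies inside $\{x_1,\dots,x_{k-1}\}$, and hence is disjoint from $X$; therefore $X$ is a total mutual-visibility set, yielding $\mu_t(G)\ge \n(G)-\diam(G)+1$, and the reverse inequality is exactly Theorem \ref{MainBound}.

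For the converse (i) $\Rightarrow$ (ii), I would take a $\mu_t(G)$-set $X$ with $|X|=\n(G)-\diam(G)+1$ and choose any diametral pair $u,v$. Reusing the argument from the proof of Theorem \ref{MainBound}, the total mutual-visibility of $X$ furnishes a diametral path $u=x_0,\dots,x_k=v$ with $\{x_1,\dots,x_{k-1}\}\cap X=\varnothing$. A direct cardinality count then shows that the disjoint sets $X$ and $\{x_1,\dots,x_{k-1}\}$ have sizes summing to $\n(G)$, forcing $V(G)\setminus X=\{x_1,\dots,x_{k-1}\}$. With this identification in hand, for any vertices $u',v'\in V(G)$ the total mutual-visibility property applied to the pair $u',v'$ produces a shortest $u',v'$-path whose internal vertices avoid $X$, i.e.\ lie in $\{x_1,\dots,x_{k-1}\}$, which is precisely what (ii) asserts.

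The argument is essentially bookkeeping rather than hard combinatorics; the only point that needs care is recognizing that one single diametral path serves \emph{uniformly} for all pairs of vertices in (ii), and this uniformity is delivered exactly by the forced equality $V(G)\setminus X=\{x_1,\dots,x_{k-1}\}$ obtained from the tight count. Hence I do not anticipate a real obstacle beyond being explicit about this cardinality argument.
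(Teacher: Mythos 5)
Your proposal is correct and follows essentially the same route as the paper: the direction (ii) $\Rightarrow$ (i) by exhibiting the complement of the interior of the diametral path as a total mutual-visibility set, and the direction (i) $\Rightarrow$ (ii) by the tight cardinality count forcing $V(G)\setminus X$ to equal the interior of a diametral path, after which total mutual-visibility of $X$ gives the uniform statement for all pairs. No gaps.
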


\begin{proof}
First, we assume that (i) holds. Let $X$ be a $\mu_t(G)$set. Let $x,y\in V(G)$ be two diametral vertices. Since $x$ and $y$ are $X$-visible, there exists a path $P$ between $x$ and $y$ whose set $Y$ of internal vertices satisfies $X\cap Y=\varnothing$. Hence, $Y\subseteq V(G)\setminus X$, and so from (i) we deduce that
$$\diam(G)-1=|Y|\le |V(G)\setminus X|=\diam(G)-1.$$ Therefore, $X=V(G)\setminus Y$, which implies that $P$ satisfies (ii).

Conversely, if $W$ is the set of internal vertices of a diametral path $P$ satisfying (ii), then  $V(G)\setminus W$ is a total mutual-visibility set, which implies that $\n(G)-\diam(G)-1=|V(G)\setminus W|\le \mu_t(G)$. In such a case, by Theorem~\ref{MainBound} we deduce that (i) holds.
\end{proof}

From Proposition~\ref{BoundDiameterCharact} we deduce the following result which characterizes the graphs with large values of total mutual-visibility number.

\begin{corollary}
\label{cor:tmv=n}
Given a graph $G$, the following statements hold.
\begin{itemize}
\item [{\rm (i)}] $\mu_t(G) = \n(G)$ if and only if $G$ is a complete graph.
\item [{\rm (ii)}] $\mu_t(G) = \n(G)-1$ if and only if $G$ is a non-complete graph with $\gamma(G)=1$.
\end{itemize}
\end{corollary}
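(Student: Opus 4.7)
The plan is to deduce both parts of the corollary directly from Theorem \ref{MainBound} (which bounds $\mu_t(G) \le \n(G) - \diam(G) + 1$) combined with Proposition \ref{BoundDiameterCharact} (which characterizes equality in that bound). The guiding idea is that once the diameter of $G$ is pinned down to a small value, the structural condition in Proposition \ref{BoundDiameterCharact} becomes extremely rigid, and the whole corollary will drop out.

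For (i), I would first dispatch the easy direction by noting that in $K_n$ every edge is itself a shortest path with empty interior, so $V(G)$ is trivially a total mutual-visibility set and $\mu_t(G) = \n(G)$. Conversely, substituting $\mu_t(G) = \n(G)$ into Theorem \ref{MainBound} forces $\diam(G) \le 1$, so $G$ must be complete. For the forward direction of (ii), given a universal vertex $v$ in a non-complete graph $G$, one has $\diam(G) = 2$, and I would verify that $V(G) \setminus \{v\}$ is a total mutual-visibility set by routing each non-adjacent pair $x, y$ through $v$ along the length-two shortest path $x, v, y$, whose only interior vertex lies outside the set. Together with part (i) this pins $\mu_t(G)$ down to $\n(G) - 1$.

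The reverse direction of (ii) is where the actual content sits, and it is where I expect the (small) main obstacle. Starting from $\mu_t(G) = \n(G) - 1$ and $G$ non-complete, Theorem \ref{MainBound} together with part (i) forces $\diam(G) = 2$, so Proposition \ref{BoundDiameterCharact} supplies a diametral path $x_0, x_1, x_2$ such that every pair of vertices of $G$ is joined by a shortest path whose internal vertices lie in $\{x_1\}$. The key move is then to apply this condition to the pair $(u, x_1)$ for an arbitrary $u \ne x_1$: since $x_1$ cannot be an internal vertex of any path that terminates at $x_1$, the shortest $u, x_1$-path supplied by the condition must have empty interior, forcing $u$ to be adjacent to $x_1$. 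Hence $x_1$ is universal and $\gamma(G) = 1$, which closes the characterization.
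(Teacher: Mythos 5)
Your proposal is correct and follows essentially the route the paper intends: the paper states this corollary as a direct consequence of Proposition~\ref{BoundDiameterCharact} (together with Theorem~\ref{MainBound}), and your argument --- pinning down $\diam(G)\le 2$ via the bound and then extracting a universal vertex from the middle vertex of the diametral path in condition (ii) of that proposition --- is exactly the deduction the paper leaves to the reader.
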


Now, we establish an interesting connection between the total mutual-visibility number and the connected domination number, denoted by $\gamma_c(G)$, which represents the minimum cardinality among all dominating sets of $G$ whose induced subgraphs are connected. A smallest connected dominating set of $G$ is a \emph{$\gamma_c(G)$-set}.

\begin{theorem}
\label{upperboundConnectedDomination}
For any connected non-complete graph $G$,
$$\mu_t(G)\le \n(G)-\gamma_c(G).$$
\end{theorem}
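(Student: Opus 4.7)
The plan is to show that for any $\mu_t(G)$-set $X$, the complement $\overline{X}=V(G)\setminus X$ is itself a connected dominating set of $G$, which immediately gives $\gamma_c(G)\le |\overline{X}|=\n(G)-\mu_t(G)$. So I need three things: $\overline{X}\ne\varnothing$, $\overline{X}$ dominates, and $G[\overline{X}]$ is connected.

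For non-emptiness, I would argue by contradiction: if $\overline{X}=\varnothing$, then $X=V(G)$ is a total mutual-visibility set of size $\n(G)$, and by Corollary~\ref{cor:tmv=n}(i) this forces $G$ to be complete, contradicting the hypothesis. (One could equivalently use Theorem~\ref{MainBound}: $\mu_t(G)\le \n(G)-\diam(G)+1\le \n(G)-1$ since $\diam(G)\ge 2$.)

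For connectedness of $G[\overline{X}]$, the argument is essentially immediate from the definition of total mutual-visibility: given any two $u,v\in\overline{X}$, there exists a shortest $u,v$-path whose internal vertices all avoid $X$, hence all vertices of this path lie in $\overline{X}$, and so $u$ and $v$ are connected in $G[\overline{X}]$.

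The step that requires a small case analysis is showing that $\overline{X}$ is dominating. Pick any $x\in X$; I need a neighbor of $x$ inside $\overline{X}$. If $x$ is not adjacent to every other vertex of $G$, choose any $v\in V(G)$ with $d_G(x,v)\ge 2$; then a shortest $x,v$-path has its internal vertices in $\overline{X}$, and in particular its second vertex is a neighbor of $x$ lying in $\overline{X}$. Otherwise $x$ is a universal vertex, and since $\overline{X}$ is non-empty every vertex of $\overline{X}$ is already adjacent to $x$. This "universal vertex" subcase is where the non-completeness hypothesis is genuinely used (through the non-emptiness of $\overline{X}$), and it is the only mildly subtle point of the argument; everything else is a direct unpacking of the definition.
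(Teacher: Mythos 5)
Your proof is correct and follows essentially the same route as the paper: both show that the complement of a $\mu_t(G)$-set is a connected dominating set, using non-completeness only to guarantee the complement is non-empty. The only cosmetic difference is that you establish domination directly (splitting off the universal-vertex case), whereas the paper argues by contradiction from a vertex of $X$ with no neighbor outside $X$; the content is the same.
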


\begin{proof}
Let $X$ be a $\mu_t(G)$-set and $X'=V(G)\setminus X$. Since $G$ is a non-complete graph, by  Theorem~\ref{MainBound} and Corollary~\ref{cor:tmv=n}, we deduce that $X'\ne \varnothing$. If there exists $x\in X$ such that $N(x)\cap X'=\varnothing$, then for every $x'\in X'$ we have that $x$ and $x'$ are not $X$-visible, which is a contradiction. Hence, $X'$ is a dominating set. Now, if $x'$ and $x''$ are vertices of two different components of the subgraph of $G$ induced by $X'$, then $x'$ and $x''$ are not $X$-visible, which is a contradiction again. Thus, $X'$ is a connected dominating set of $G$. Therefore,
$\n(G)=|X|+|X'|\ge\mu_t(G)+\gamma_c(G)$, as required.
\end{proof}

It is easy to construct examples of graphs achieving the bound above. In particular, as we will show in Theorem~\ref{th:tmv-corona}, the bound is achieved for connected corona graphs.

\begin{corollary}
\label{CharacterizationEqDiaameter}
 Let $G$ be a non-complete graph. If $\mu_t(G)= \n(G)-\diam(G)+1$, then $\gamma_c(G)=\diam(G)-1$.
\end{corollary}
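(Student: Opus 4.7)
The plan is to establish the two inequalities $\gamma_c(G) \le \diam(G)-1$ and $\gamma_c(G) \ge \diam(G)-1$ separately; the claimed equality follows at once.

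For the upper bound on $\gamma_c(G)$, I would appeal directly to Theorem~\ref{upperboundConnectedDomination}. Since $G$ is non-complete, that theorem applies and gives $\mu_t(G)\le \n(G)-\gamma_c(G)$. Substituting the hypothesis $\mu_t(G)=\n(G)-\diam(G)+1$ produces $\n(G)-\diam(G)+1 \le \n(G)-\gamma_c(G)$, which rearranges to $\gamma_c(G)\le \diam(G)-1$.

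For the reverse inequality I would invoke the folklore bound $\diam(G)\le \gamma_c(G)+1$, which holds for every connected graph on at least two vertices. To keep the proof self-contained I would include a one-paragraph verification: let $D$ be a $\gamma_c(G)$-set and let $u,v$ be diametral vertices of $G$. Since $G[D]$ is connected, $d_G(a,b)\le |D|-1$ for all $a,b\in D$; moreover, each of $u,v$ either lies in $D$ or has a neighbor in $D$, so concatenating at most one edge at each end with a shortest path through $G[D]$ yields $d_G(u,v)\le 1+(|D|-1)+1=\gamma_c(G)+1$. Hence $\gamma_c(G)\ge \diam(G)-1$.

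There is essentially no obstacle: the corollary is just the conjunction of Theorem~\ref{upperboundConnectedDomination} with the classical lower bound on $\gamma_c$ in terms of $\diam$. The only care needed is to record that the non-completeness of $G$ is precisely what allows Theorem~\ref{upperboundConnectedDomination} to be applied. (Alternatively, one could replace the upper-bound step by a direct appeal to Proposition~\ref{BoundDiameterCharact}, observing that the set $\{x_1,\dots,x_{k-1}\}$ of internal vertices of the diametral path furnished by condition (ii) has cardinality $\diam(G)-1$, is connected as it is a subpath of $G$, and is dominating by the same argument used in the proof of Theorem~\ref{upperboundConnectedDomination}; but this route is longer and strictly no stronger.)
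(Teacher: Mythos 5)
Your proposal is correct and follows essentially the same route as the paper: the lower bound $\gamma_c(G)\ge \diam(G)-1$ (which the paper also records, with a one-line justification) combined with Theorem~\ref{upperboundConnectedDomination} applied to the hypothesis forces equality. The only difference is presentational — you split the argument into two inequalities and spell out the folklore bound $\diam(G)\le\gamma_c(G)+1$ in more detail, whereas the paper sandwiches both bounds into a single chain.
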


\begin{proof}
Since any path between two diametral vertices has $\diam(G) - 1$ internal
vertices, it is clear that $\gamma_c(G)\ge  \diam(G) - 1$.
Hence, if $\mu_t(G)= \n(G)-\diam(G)+1$, then by Theorem~\ref{upperboundConnectedDomination} we have
$$\n(G)-\diam(G)+1=\mu_t(G)\le  \n(G)-\gamma_c(G) \le  \n(G)-\diam(G)+1,$$
which implies that $\gamma_c(G)=\diam(G) - 1$.
\end{proof}

The converse of Corollary~\ref{CharacterizationEqDiaameter} does not hold.  For instance, if $G$ is the graph shown in Figure~\ref{FigConnectedDomination}, then $\gamma_c(G)=5=\diam(G)-1$, while $\mu_t(G)=2<7=\n(G)-\diam(G)+1$.

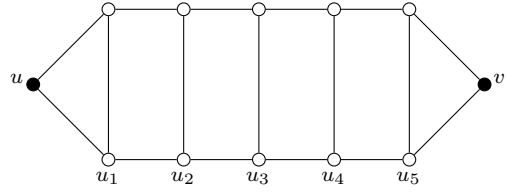
\begin{figure}[h]
\begin{center}
\begin{tikzpicture}[transform shape, inner sep = .6mm]

\node [draw=black, shape=circle, fill=black] (a) at  (-3,0) {};
\node [draw=black, shape=circle, fill=white] (b) at  (-2,-1) {};
\node [draw=black, shape=circle, fill=white] (c) at  (-1,-1) {};
\node [draw=black, shape=circle, fill=white] (d) at  (0,-1) {};
\node [draw=black, shape=circle, fill=white] (e) at  (1,-1) {};
\node [draw=black, shape=circle, fill=white] (f) at  (2,-1) {};
\node [draw=black, shape=circle, fill=black] (g) at  (3,0) {};
\node [draw=black, shape=circle, fill=white] (h) at  (2,1) {};
\node [draw=black, shape=circle, fill=white] (i) at  (1,1) {};
\node [draw=black, shape=circle, fill=white] (j) at  (0,1) {};
\node [draw=black, shape=circle, fill=white] (k) at  (-1,1) {};
\node [draw=black, shape=circle, fill=white] (l) at  (-2,1) {};

\draw (a)--(b)--(c)--(d)--(e)--(f)--(g)--(h)--(i)--(j)--(k)--(l)--(a);
\draw (b)--(l); \draw (c)--(k); \draw (d)--(j);
\draw (e)--(i);
\draw (f)--(h);

\node [left] at (-3.05,0) {$^{u}$};
\node [below] at (-2,-1.1) {$^{u_1}$};
\node [below] at (-1,-1.1) {$^{u_2}$};
\node [below] at (0,-1.1) {$^{u_3}$};
\node [below] at (1,-1.1) {$^{u_4}$};
\node [below] at (2,-1.1) {$^{u_5}$};
\node [right] at (3.05,0) {$^{v}$};

\end{tikzpicture}\caption{$\{u_1,\dots, u_5\}$ is a $\gamma_c(G)$-set.} \label{FigConnectedDomination}
\end{center}
\end{figure}

\begin{proposition}\label{CharactBoundConnectedDominationCharact}
Given a  connected non-complete graph $G$, the following statements are equivalent.
\begin{enumerate}[{\rm (i)}]
\item $\mu_t(G)= \n(G)-\gamma_c(G).$
\item There exists a $\gamma_c(G)$-set $S$  such that for every pair $u,v$ of vertices of $G$ there exists a shortest path $u=y_0,\dots, y_{k'}=v$ such that $\{y_1,\dots, y_{k'-1}\}\subseteq S$.
\end{enumerate}
\end{proposition}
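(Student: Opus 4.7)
\bigskip
\noindent\textbf{Proof plan for Proposition~\ref{CharactBoundConnectedDominationCharact}.}

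The plan is to mirror the argument used for Proposition~\ref{BoundDiameterCharact}, replacing the role played there by the interior of a diametral path with that of a minimum connected dominating set. The two directions will be handled by passing back and forth between a $\mu_t(G)$-set $X$ and its complement $S = V(G)\setminus X$, observing that when $|X| = \n(G)-\gamma_c(G)$ the set $S$ must be a $\gamma_c(G)$-set, and conversely the complement of any $\gamma_c(G)$-set $S$ satisfying the path condition is automatically a total mutual-visibility set.

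For the implication (i) $\Rightarrow$ (ii), I would start with a $\mu_t(G)$-set $X$ and set $S = V(G)\setminus X$. The argument in the proof of Theorem~\ref{upperboundConnectedDomination} already shows that $S$ is a connected dominating set of $G$, so $|S| \ge \gamma_c(G)$. Combined with the hypothesis $|X| = \n(G)-\gamma_c(G)$, this forces $|S| = \gamma_c(G)$, hence $S$ is a $\gamma_c(G)$-set. Now, for any pair $u,v \in V(G)$, since $X$ is a total mutual-visibility set, $u$ and $v$ are $X$-visible, i.e.\ there is a shortest $u,v$-path $u = y_0,\ldots,y_{k'} = v$ with $\{y_1,\ldots,y_{k'-1}\} \cap X = \varnothing$. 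Equivalently, $\{y_1,\ldots,y_{k'-1}\} \subseteq S$, which is exactly condition (ii).

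For the implication (ii) $\Rightarrow$ (i), let $S$ be a $\gamma_c(G)$-set satisfying the path condition and set $X = V(G)\setminus S$. The condition (ii) says exactly that every pair of vertices of $G$ admits a shortest path whose internal vertices avoid $X$, so $X$ is a total mutual-visibility set. Thus $\mu_t(G) \ge |X| = \n(G)-\gamma_c(G)$, and combining this with the reverse inequality provided by Theorem~\ref{upperboundConnectedDomination} yields (i).

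There is no serious obstacle here; the proof is essentially a bookkeeping exercise around the complementarity $|X| + |S| = \n(G)$. The only point that requires a moment of care is that condition (ii) is stated for every pair $u,v$ of vertices of $G$, not merely for pairs inside a prescribed set; this is precisely what is needed to conclude that $X$ is a \emph{total} (as opposed to ordinary) mutual-visibility set, and it is what guarantees that the argument actually leverages Theorem~\ref{upperboundConnectedDomination} in the right way.
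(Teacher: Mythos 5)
Your proposal is correct and follows essentially the same route as the paper's proof: the forward direction uses the fact (from the proof of Theorem~\ref{upperboundConnectedDomination}) that the complement of a $\mu_t(G)$-set is a connected dominating set, forced to be minimum by the cardinality hypothesis, and the converse direction observes that the complement of a $\gamma_c(G)$-set satisfying (ii) is a total mutual-visibility set, then invokes Theorem~\ref{upperboundConnectedDomination}. The only difference is that you spell out the $X$-visibility step in the forward direction a bit more explicitly than the paper does.
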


\begin{proof}
First, we assume that (i) holds. Let $X$ be a $\mu_t(G)$-set and $X'=V(G)\setminus X$. As we have shown in the proof of Theorem~\ref{upperboundConnectedDomination}, $X'$ is a connected dominating set, and so from (i) we deduce that
$$\gamma_c(G)\le |X'|= |V(G)\setminus X|=\n(G)-\mu_t(G) =\gamma_c(G).$$
Therefore, $X'$ is a $\gamma_c(G)$-set which satisfies (ii).

Conversely, if $S$ is a $\gamma_c(G)$-set satisfying (ii), then  $V(G)\setminus S$ is a total mutual-visible set, which implies that $\n(G)-\gamma_c(G)=|V(G)\setminus S|\le \mu_t(G).$ In such a case, by Theorem~\ref{upperboundConnectedDomination} we deduce that (i) holds.
\end{proof}

Notice that from Proposition~\ref{CharactBoundConnectedDominationCharact} we deduce the following result which was recently obtained in \cite{tian-2022+}.
\begin{corollary}{\rm \cite{tian-2022+}}
For any tree $T$ of order at least three, $\mu_t(T)=n_1(T)$.
\end{corollary}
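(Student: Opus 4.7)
My plan is to deduce this corollary directly from Proposition~\ref{CharactBoundConnectedDominationCharact} by identifying an explicit $\gamma_c(T)$-set for which condition (ii) is automatic in a tree.

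The first step is to recall the well-known fact that in a tree $T$ of order at least three, the set $S$ of all non-leaf (internal) vertices is the unique minimum connected dominating set. Indeed, $S$ induces a subtree of $T$ (removing leaves from a tree keeps it connected), every leaf is adjacent to a vertex of $S$, and on the other hand no connected dominating set can omit an internal vertex $w$ without disconnecting the dominating set whenever $w$ separates two other non-leaves. This yields $\gamma_c(T) = \n(T) - n_1(T)$.

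The second step is to verify hypothesis (ii) of Proposition~\ref{CharactBoundConnectedDominationCharact} with this $S$. Since $T$ is a tree, for any two vertices $u,v$ there is a \emph{unique} $u,v$-path, which is automatically the unique shortest path. Any internal vertex $y_i$ of this path has the two neighbours $y_{i-1}$ and $y_{i+1}$ on the path, so $\deg_T(y_i) \ge 2$, meaning $y_i$ is not a leaf and hence $y_i \in S$. Thus $\{y_1,\dots,y_{k'-1}\} \subseteq S$, as required.

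Combining these two facts, Proposition~\ref{CharactBoundConnectedDominationCharact} gives
\[
\mu_t(T) \;=\; \n(T) - \gamma_c(T) \;=\; \n(T) - (\n(T) - n_1(T)) \;=\; n_1(T).
\]
There is no real obstacle here; the only point requiring a line of care is the identification of the minimum connected dominating set with the set of internal vertices (which relies on $\n(T)\ge 3$ so that at least one internal vertex exists and $S$ is nonempty), after which condition (ii) follows automatically from the uniqueness of shortest paths in trees.
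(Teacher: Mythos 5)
Your proof is correct and is exactly the deduction the paper intends: the paper states this corollary as an immediate consequence of Proposition~\ref{CharactBoundConnectedDominationCharact}, and you supply precisely the missing details (the internal vertices form the minimum connected dominating set, and uniqueness of paths in a tree makes condition (ii) automatic). No issues.
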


Next we will apply Proposition~\ref{CharactBoundConnectedDominationCharact} to the cases of join and corona graphs.
The \emph{join graph} $G+H$ is defined as the graph obtained from the disjoint union of a copy of $G$ and a copy of $H$ by adding an edge between each vertex of $G$ and each vertex of $H$.

\begin{corollary}
\label{th:tmv-corona}
Let $G$ and $H$ be two non-simultaneously complete graphs.
\begin{itemize}
  \item [{\rm (i)}] If $\gamma(G)=1$, then  $\mu_t(G+H)= \n(G)+\n(H)-1.$
  \item [{\rm (ii)}] If $\gamma(G)\ne 1$ and $\gamma(H)\ne 1$, then $\mu_t(G + H)= \n(G)+\n(H)-2.$
\end{itemize}
\end{corollary}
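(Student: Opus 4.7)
The plan is to prove each part by invoking one of the ready-made characterizations developed above. For (i), I would observe that if $\gamma(G)=1$ then a universal vertex of $G$ remains adjacent to all other vertices of $G+H$ (the join adds all cross-edges), so $\gamma(G+H)=1$. Since $G$ and $H$ are not simultaneously complete, a non-adjacent pair lying in the non-complete factor is still non-adjacent in $G+H$, so $G+H$ itself is not complete. Corollary~\ref{cor:tmv=n}(ii) then yields $\mu_t(G+H)=\n(G+H)-1=\n(G)+\n(H)-1$ immediately.

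For (ii), the first step is to pin down $\gamma_c(G+H)=2$. Fix any $g\in V(G)$ and $h\in V(H)$: the set $\{g,h\}$ is an adjacent dominating pair, because $g$ is joined to every vertex of $H$ and $h$ to every vertex of $G$; hence $\gamma_c(G+H)\le 2$. Conversely, $\gamma_c(G+H)\ge 2$, since a single dominating vertex of $G+H$ would have to lie in one factor and be universal there, contradicting $\gamma(G)\ne 1$ and $\gamma(H)\ne 1$. (Note also that $\gamma(G),\gamma(H)\ne 1$ forces both factors to be non-complete, so the hypothesis on the join is in fact satisfied.)

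Next I would verify condition (ii) of Proposition~\ref{CharactBoundConnectedDominationCharact} with $S=\{g,h\}$. Any pair $u,v$ of vertices of $G+H$ is either adjacent (so the edge $uv$ is a shortest path with no internal vertex, and the condition holds vacuously) or non-adjacent, in which case both endpoints must lie in the same factor, say $V(G)$, and are at distance exactly $2$ in $G+H$; then $u,h,v$ is a shortest $u,v$-path whose unique internal vertex lies in $S$. The symmetric argument covers the $V(H)$ case. Hence Proposition~\ref{CharactBoundConnectedDominationCharact} gives $\mu_t(G+H)=\n(G+H)-\gamma_c(G+H)=\n(G)+\n(H)-2$.

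The only delicate point is a little book-keeping of when $u$ or $v$ itself equals $g$ or $h$: one must check that the prescribed shortest path through $S$ still exists and that its internal vertices lie in $S$. For instance, if $u=g$ and $v\in V(G)\setminus N(g)$, the path $g,h,v$ still works; if one of the endpoints lies in $V(G)$ and the other in $V(H)$, they are adjacent across the join. All such boundary cases reduce to a short inspection of the join structure, so I do not anticipate any serious obstacle beyond these small case distinctions.
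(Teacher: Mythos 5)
Your proposal is correct and follows essentially the same route as the paper: part (i) via the universal vertex of $G$ remaining universal in $G+H$ together with Corollary~\ref{cor:tmv=n}, and part (ii) by showing $\gamma_c(G+H)=2$ and checking that $\{g,h\}$ satisfies condition (ii) of Proposition~\ref{CharactBoundConnectedDominationCharact}. You simply spell out the distance-two and boundary-case verifications that the paper leaves implicit.
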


\begin{proof}
Since every universal vertex of $G$ is a universal vertex of $G+H$, from Proposition~\ref{CharactBoundConnectedDominationCharact} or from Corollary~\ref{cor:tmv=n}, we deduce (i).

Now, if   $\gamma(G)\ne 1$ and $\gamma(H)\ne 1$, then $\gamma_c(G+H)=2$ and  we only need to observe that for any vertex $g\in V(G)$ and any vertex $h\in V(H)$, the set $\{g,h\}$ is a connected dominating set   of $G+H$ which satisfies Proposition~\ref{CharactBoundConnectedDominationCharact} (ii).
\end{proof}

Let $G$ and $H$ be graphs where $V(G) = \{u_1, \ldots,u_{\n(G)}\}$.  The \emph{corona product} $G\odot H$ is defined as the graph obtained from the disjoint union of a copy of $G$ and $\n(G)$ copies of $H$, denoted by $H_i$, $i\in \{1,2,\dots,\n(G)\}$. The product $G\odot H$ is then constructed by making $u_i$ adjacent to every vertex in $H_i$ for each $i\in \{1,2,\dots,\n(G)\}$.
Notice that the corona product $K_1\odot H$ is isomorphic to the join graph $K_1+H$.

\begin{corollary}
\label{th:tmv-corona}
For any connected graph $G$ and any graph $H$, $$\mu_t(G\odot H)= \n(G)\n(H).$$
\end{corollary}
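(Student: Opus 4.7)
The plan is to apply Proposition~\ref{CharactBoundConnectedDominationCharact} to $G\odot H$ with $S=V(G)$ viewed as a subset of $V(G\odot H)$. Two ingredients must be checked: that $V(G)$ is a $\gamma_c(G\odot H)$-set, and that every pair of vertices of $G\odot H$ is joined by some shortest path whose internal vertices lie in $V(G)$.

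For the first ingredient, $V(G)$ induces a connected subgraph of $G\odot H$ (a copy of $G$), and every vertex of $H_i$ is adjacent to $u_i$, so $V(G)$ is a connected dominating set and $\gamma_c(G\odot H)\le \n(G)$. For the matching lower bound, any connected dominating set $D$ must intersect each $V(H_i)\cup\{u_i\}$, since in $G\odot H$ the neighbors of any vertex of $V(H_i)$ lie inside $V(H_i)\cup\{u_i\}$ and $D$ must dominate $V(H_i)$. As the sets $V(H_i)\cup\{u_i\}$ for $i\in\{1,\dots,\n(G)\}$ are pairwise disjoint, $|D|\ge \n(G)$, so $\gamma_c(G\odot H)=\n(G)$ and $V(G)$ is a $\gamma_c(G\odot H)$-set.

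For the second ingredient, I would run a case analysis on a pair $p,q\in V(G\odot H)$. If $p,q\in V(G)$, a geodesic in $G$ is also a geodesic in $G\odot H$ with internal vertices in $V(G)$. If $p=u_i\in V(G)$ and $q\in V(H_j)$, concatenate a $G$-geodesic from $u_i$ to $u_j$ with the edge $u_jq$ (collapsing to the single edge $u_iq$ when $i=j$). If $p,q\in V(H_i)$, the length-two path $p,u_i,q$ is a geodesic: the only way a shorter $p,q$-path could exist would be for $p$ and $q$ to be adjacent, in which case that edge itself has no internal vertices. If $p\in V(H_i)$ and $q\in V(H_j)$ with $i\neq j$, prepend $p,u_i$ and append $u_j,q$ to a $G$-geodesic between $u_i$ and $u_j$. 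In every case the internal vertices lie in $V(G)$, so Proposition~\ref{CharactBoundConnectedDominationCharact} gives $\mu_t(G\odot H)=\n(G\odot H)-\gamma_c(G\odot H)=\n(G)(\n(H)+1)-\n(G)=\n(G)\n(H)$.

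The only non-routine point is the lower bound $\gamma_c(G\odot H)\ge \n(G)$, which rests on the ``block'' structure of the corona: the closed neighborhood of $V(H_i)$ is contained in the block $V(H_i)\cup\{u_i\}$, and these $\n(G)$ blocks are pairwise disjoint, so each must receive its own vertex of any dominating set. Once this structural observation is in hand, the four-case enumeration of geodesics is straightforward.
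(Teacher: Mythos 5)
Your proposal is correct and follows exactly the paper's route: it invokes Proposition~\ref{CharactBoundConnectedDominationCharact} with $S=V(G)$, merely filling in the details (that $\gamma_c(G\odot H)=\n(G)$ via the disjoint blocks $V(H_i)\cup\{u_i\}$, and the four-case check of geodesics) that the paper leaves as an observation. The only caveat, shared with the paper, is that Proposition~\ref{CharactBoundConnectedDominationCharact} requires $G\odot H$ to be non-complete, which holds automatically once $\n(G)\ge 2$ or $H$ is non-complete, but fails for $K_1\odot K_m\cong K_{m+1}$, where the stated formula itself breaks down.
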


\begin{proof}We only need to observe that $V(G)$ is a $\gamma_c(G\odot H)$-set which satisfies Proposition~\ref{CharactBoundConnectedDominationCharact} (ii).
%
%
\end{proof}

\section{Compulsory vertices and forbidden vertices in any $\mu_t(G)$-set}
\label{Section:Compulsory and forbidden vertices}

In order to give some additional results on the total mutual-visibility number of a graph, we need to introduce the following notation.
Given a graph $G$, we define the set $\mathcal{F}(G)\subseteq V(G)$ as the set of forbidden vertices in any $\mu_t(G)$-set, i.e., the set of vertices not belonging to any $\mu_t(G)$-set.

We also define $\mathcal{C}(G) $ as the set of compulsory vertices in any $\mu_t(G)$-set, i.e., the set of vertices belonging to every $\mu_t(G)$-set.

If $X$ is a $\mu_t(G)$-set, then by definition of $\mathcal{C}(G)$ and $\mathcal{F}(G)$  we have that $\mathcal{C}(G)\subseteq X\subseteq V(G)\setminus \mathcal{F}(G)$. Therefore, the following proposition follows.

\begin{proposition}
\label{prop:tmv-general-bounds}
If $G$ is a connected graph, then $|\mathcal{C}(G)| \le \mu_t(G)\le \n(G)-|\mathcal{F}(G)|$.
\end{proposition}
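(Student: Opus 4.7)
The statement is essentially a direct consequence of the definitions of $\mathcal{C}(G)$ and $\mathcal{F}(G)$, as the paragraph immediately preceding the proposition already anticipates. My plan is therefore to make that observation precise.

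First, I would fix an arbitrary $\mu_t(G)$-set $X$; such a set exists because $\mu_t(G)\ge 0$ for every connected graph $G$ (with $X=\varnothing$ being admissible when necessary). The key step is to justify the sandwich $\mathcal{C}(G)\subseteq X\subseteq V(G)\setminus\mathcal{F}(G)$. For the left inclusion, I would invoke the very definition of $\mathcal{C}(G)$: every vertex of $\mathcal{C}(G)$ lies in \emph{every} $\mu_t(G)$-set, hence in $X$. For the right inclusion, I would argue by contradiction: if some $v\in X$ belonged to $\mathcal{F}(G)$, then $v$ would lie in the particular $\mu_t(G)$-set $X$, contradicting the definition of forbidden vertex; thus $X\cap\mathcal{F}(G)=\varnothing$, equivalently $X\subseteq V(G)\setminus \mathcal{F}(G)$.

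Once the chain of inclusions is in place, I would conclude by taking cardinalities, using monotonicity of cardinality under inclusion and the fact that $|X|=\mu_t(G)$:
\[
|\mathcal{C}(G)|\;\le\;|X|\;=\;\mu_t(G)\;=\;|X|\;\le\;|V(G)\setminus\mathcal{F}(G)|\;=\;\n(G)-|\mathcal{F}(G)|.
\]

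There is no genuine obstacle here: the statement is a tautological reformulation of the definitions, and my proof is essentially a two-line unpacking. The only subtle point worth a sentence is the well-definedness of $\mathcal{C}(G)$ and $\mathcal{F}(G)$ (which rely on the family of $\mu_t(G)$-sets being nonempty), but this is immediate from the existence of at least one largest total mutual-visibility set in any finite graph.
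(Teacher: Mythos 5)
Your proposal is correct and follows exactly the argument the paper gives (in the paragraph immediately preceding the proposition): fix a $\mu_t(G)$-set $X$, observe $\mathcal{C}(G)\subseteq X\subseteq V(G)\setminus\mathcal{F}(G)$ by the definitions, and take cardinalities. No further comment is needed.
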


Although the following statement is also immediate, it is very useful.

\begin{proposition}
\label{general-bounds-equality}
Given a graph  $G$, the following statements are equivalent.
\begin{enumerate}[{\rm (i)}]
\item $\mu_t(G)=|\mathcal{C}(G)|$.
\item $\mu_t(G)= \n(G)-|\mathcal{F}(G)|$.
\item $|\mathcal{C}(G)|+|\mathcal{F}(G)|=\n(G)$.
\end{enumerate}
\end{proposition}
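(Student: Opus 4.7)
The plan is to combine the sandwich $|\mathcal{C}(G)| \le \mu_t(G) \le \n(G)-|\mathcal{F}(G)|$ from Proposition~\ref{prop:tmv-general-bounds} with the elementary observation that $\mathcal{C}(G)$ and $\mathcal{F}(G)$ are disjoint: no vertex can simultaneously belong to every $\mu_t(G)$-set and to none. This disjointness gives the auxiliary inequality $|\mathcal{C}(G)|+|\mathcal{F}(G)|\le \n(G)$, with equality iff $\mathcal{C}(G)\cup \mathcal{F}(G)=V(G)$. Once this is in hand, the three implications follow without surprise.

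First, I would handle (iii) $\Rightarrow$ (i) and (iii) $\Rightarrow$ (ii) at the same time. If $|\mathcal{C}(G)|+|\mathcal{F}(G)|=\n(G)$, then $|\mathcal{C}(G)|=\n(G)-|\mathcal{F}(G)|$, so the sandwich from Proposition~\ref{prop:tmv-general-bounds} collapses and forces $\mu_t(G)=|\mathcal{C}(G)|=\n(G)-|\mathcal{F}(G)|$, yielding both (i) and (ii).

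For (i) $\Rightarrow$ (iii), assume $\mu_t(G)=|\mathcal{C}(G)|$. Every $\mu_t(G)$-set $X$ satisfies $\mathcal{C}(G)\subseteq X$ and $|X|=\mu_t(G)=|\mathcal{C}(G)|$, which forces $X=\mathcal{C}(G)$. Thus $\mathcal{C}(G)$ is the unique $\mu_t(G)$-set, so every vertex not in $\mathcal{C}(G)$ belongs to no $\mu_t(G)$-set and therefore lies in $\mathcal{F}(G)$. This gives $V(G)=\mathcal{C}(G)\cup\mathcal{F}(G)$, and disjointness yields (iii). The implication (ii) $\Rightarrow$ (iii) is symmetric: from $\mu_t(G)=\n(G)-|\mathcal{F}(G)|=|V(G)\setminus \mathcal{F}(G)|$ together with $X\subseteq V(G)\setminus \mathcal{F}(G)$ for every $\mu_t(G)$-set $X$, we conclude that $V(G)\setminus \mathcal{F}(G)$ is itself the unique $\mu_t(G)$-set, so every vertex outside $\mathcal{F}(G)$ lies in every $\mu_t(G)$-set, hence in $\mathcal{C}(G)$, and (iii) follows again.

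There is no real obstacle here; the whole argument is essentially a pigeonhole bookkeeping on the tight bounds, and the only subtlety worth flagging explicitly is the disjointness of $\mathcal{C}(G)$ and $\mathcal{F}(G)$, which is used to pass from $V(G)\subseteq \mathcal{C}(G)\cup\mathcal{F}(G)$ to the cardinality equality in (iii).
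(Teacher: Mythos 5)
Your proof is correct and rests on the same two ingredients as the paper's: the sandwich bounds of Proposition~\ref{prop:tmv-general-bounds} and the observation that tightness forces the unique $\mu_t(G)$-set to be $\mathcal{C}(G)$ (equivalently $V(G)\setminus\mathcal{F}(G)$). The only organizational difference is that you route the equivalences through (iii) while the paper proves (i) $\Leftrightarrow$ (ii) directly and lets the sandwich supply the rest; your explicit remark on the disjointness of $\mathcal{C}(G)$ and $\mathcal{F}(G)$ is a detail the paper uses implicitly.
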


\begin{proof}
If (i) $\Leftrightarrow$ (ii) holds, then the other equivalences are deduced by Proposition~\ref{prop:tmv-general-bounds}. Therefore, we will limit ourselves to prove the equivalence (i) $\Leftrightarrow$ (ii).

Let $X$ be a $\mu_t(G)$-set. Since $\mathcal{C}(G)\subseteq X$, if $\mu_t(G)=|\mathcal{C}(G)|$, then  $\mathcal{C}(G)$ is the only $\mu_t(G)$-set. In such a case, $V(G)\setminus \mathcal{C}(G)=\mathcal{F}(G)$, and so $|\mathcal{F}(G)|= \n(G)-\mu_t(G)$, as required.

Conversely, if $\mu_t(G)= \n(G)-|\mathcal{F}(G)|$, then $V(G)\setminus \mathcal{F}(G)$ is the only $\mu_t(G)$-set, and this implies that  $V(G)\setminus \mathcal{F}(G)=\mathcal{C}(G)$. Therefore, $\mu_t(G)=\n(G)-|\mathcal{F}(G)|=|\mathcal{C}(G)|$.
\end{proof}

Figure~\ref{FigureN-Compulsory} shows an example of graph which illustrates Proposition~\ref{general-bounds-equality}.

\begin{figure}[h]
\begin{center}
\begin{tikzpicture}[transform shape, inner sep = .6mm]

\node [draw=black, shape=circle, fill=black] (a) at  (-3,0) {};
\node [draw=black, shape=circle, fill=white] (b) at  (-2,0) {};
\node [draw=black, shape=circle, fill=white] (c) at  (-1,-1) {};
\node [draw=black, shape=circle, fill=black] (d) at  (0,-2) {};
\node [draw=black, shape=circle, fill=white] (e) at  (1,-1) {};
\node [draw=black, shape=circle, fill=white] (f) at  (2,0) {};
\node [draw=black, shape=circle, fill=black] (g) at  (3,0) {};
\node [draw=black, shape=circle, fill=white] (h) at  (1,1) {};
\node [draw=black, shape=circle, fill=black] (i) at  (0,2) {};
\node [draw=black, shape=circle, fill=white] (j) at  (-1,1) {};
\node [draw=black, shape=circle, fill=white] (k) at  (0,0) {};

\draw (a)--(b)--(c)--(d)--(e)--(f)--(g);
\draw (f)--(h)--(i)--(j)--(j)--(b);
\draw (c)--(k)--(h);
\draw (e)--(k)--(j);

\node [above] at (-3,0.1) {$^{a}$};
\node [above] at (-2,0.1) {$^{b}$};
\node [below] at (-1,-1.1) {$^{c}$};
\node [below] at (0,-2.1) {$^{d}$};
\node [below] at (1,-1.1) {$^{e}$};
\node [above] at (2,0.1) {$^{f}$};
\node [above] at (3,0.1) {$^{g}$};
\node [above] at (1,1.1) {$^{h}$};
\node [above] at (0,2.1) {$^{i}$};
\node [above] at (-1,1.1) {$^{j}$};
\node [above] at (0,0.1) {$^{k}$};

\end{tikzpicture}\caption{$\mathcal{C}(G)=\{a,d,g,i\}$ is the only  $\mu_t(G)$-set.} \label{FigureN-Compulsory}
\end{center}
\end{figure}
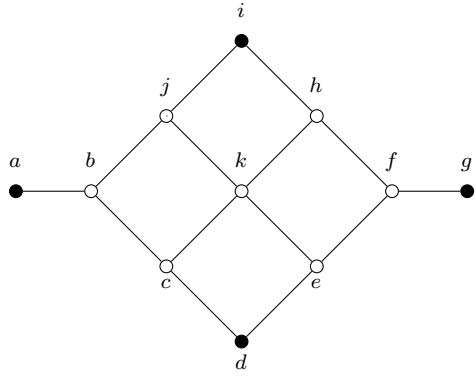

A vertex of a graph is a {\em simplicial} if the subgraph induced by its neighbors is a complete graph. Let $\mathcal{S}(G)$ be the set of simplicial vertices in $G$. Observe that $\mathcal{S}(G)\subseteq \mathcal{C}(G)$. Let $\mathcal{P}(G)$ be the subset of $V(G)$ such that $v\in \mathcal{P}(G)$   if and only if there exist two vertices $u,w\in V(G)$ such that $N_G[u]\cap N_G[w]=\{v\}$,
i.e., $v\in \mathcal{P}(G)$ if and only if $v$ is the middle vertex of a convex $P_3$ in $G$.  Obviously, $\mathcal{P}(G)\subseteq \mathcal{F}(G)$.

Notice that the problem of deciding if a vertex belongs to $\mathcal{S}(G)$, or to $\mathcal{P}(G)$, is algorithmically simple. Therefore, the following result is an important tool to estimate the value of $\mu_t(G)$.


\begin{proposition}
\label{pro:tmv-general-bounds-equality}
Given a connected graph $G$, the following statements hold.
\begin{enumerate}[{\rm (i)}]
\item $\mu_t(G) \ge |\mathcal{S}(G)|$.

\item If $\mu_t(G) = |\mathcal{S}(G)|$, then $\mu_t(G) = \n(G)-|\mathcal{P}(G)|$.

\item { \em \cite{tian-2022+}}  $\mu_t(G) \le \n(G)-|\mathcal{P}(G)| $.

\item If $\mu_t(G) = \n(G)-|\mathcal{P}(G)| $, then $\mu_t(G) = |\mathcal{C}(G)|$.
\end{enumerate}
\end{proposition}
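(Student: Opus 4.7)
The plan is to establish (i) and (iv) quickly from the already-noted inclusions $\mathcal{S}(G)\subseteq\mathcal{C}(G)$ and $\mathcal{P}(G)\subseteq\mathcal{F}(G)$, together with Propositions~\ref{prop:tmv-general-bounds} and~\ref{general-bounds-equality}, and then to devote the remaining work to (ii). For (i) I would verify directly that $\mathcal{S}(G)$ is a total mutual-visibility set: if a simplicial vertex were internal to some shortest path, the vertices immediately before and after it on that path would be non-adjacent neighbours of it, contradicting simpliciality; hence $\mu_t(G)\ge|\mathcal{S}(G)|$. For (iv), assuming $\mu_t(G)=\n(G)-|\mathcal{P}(G)|$, Proposition~\ref{prop:tmv-general-bounds} together with $\mathcal{P}(G)\subseteq\mathcal{F}(G)$ yields
\[
\n(G)-|\mathcal{P}(G)|=\mu_t(G)\le \n(G)-|\mathcal{F}(G)|\le \n(G)-|\mathcal{P}(G)|,
\]
forcing $\mu_t(G)=\n(G)-|\mathcal{F}(G)|$ and hence, via Proposition~\ref{general-bounds-equality}, $\mu_t(G)=|\mathcal{C}(G)|$.

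The substance is (ii). Under the hypothesis $\mu_t(G)=|\mathcal{S}(G)|$, the chain $|\mathcal{S}(G)|\le|\mathcal{C}(G)|\le \mu_t(G)$ coming from Proposition~\ref{prop:tmv-general-bounds} and $\mathcal{S}(G)\subseteq \mathcal{C}(G)$ collapses to equalities, so $\mathcal{S}(G)=\mathcal{C}(G)$ is the unique $\mu_t(G)$-set and $\mathcal{F}(G)=V(G)\setminus\mathcal{S}(G)$. Since $\mathcal{P}(G)\subseteq\mathcal{F}(G)$, the target identity $\mu_t(G)=\n(G)-|\mathcal{P}(G)|$ is equivalent to the reverse inclusion $\mathcal{F}(G)\subseteq\mathcal{P}(G)$, i.e.\ that every non-simplicial vertex belongs to $\mathcal{P}(G)$.

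I would prove this last statement by contraposition. Take a non-simplicial vertex $v\notin\mathcal{P}(G)$ and pick non-adjacent neighbours $a,b$ of $v$. Because $v\in N_G[a]\cap N_G[b]$, because $v\notin\mathcal{P}(G)$, and because neither $a$ nor $b$ belongs to $N_G[a]\cap N_G[b]$ (they are distinct and non-adjacent), there must exist a common neighbour $w$ of $a$ and $b$ with $w\ne v$. Such a $w$ has the non-adjacent pair $a,b$ in its neighbourhood, so $w$ is non-simplicial and therefore $w\notin\mathcal{S}(G)$.

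The main obstacle is then the path-replacement step used to contradict maximality. For any pair $x,y\in V(G)$, start with a shortest $x,y$-path $P$ whose internal vertices avoid $\mathcal{S}(G)$ (available by (i)). If $v$ does not lie internally on $P$, then $P$ already avoids $\mathcal{S}(G)\cup\{v\}$ internally; otherwise let $a',b'$ be the vertices immediately preceding and following $v$ on $P$ and, applying the construction above with $a=a'$, $b=b'$, substitute the segment $a',v,b'$ by $a',w,b'$. The resulting walk $P'$ has length $d_G(x,y)$ and cannot revisit a vertex, since any repetition would produce a walk of length strictly less than $d_G(x,y)$; hence $P'$ is another shortest $x,y$-path, and its internal vertex set is obtained from that of $P$ by swapping $v$ for $w$, so it avoids $\mathcal{S}(G)\cup\{v\}$. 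Consequently $\mathcal{S}(G)\cup\{v\}$ is a total mutual-visibility set of cardinality $|\mathcal{S}(G)|+1>\mu_t(G)$, a contradiction. Therefore every non-simplicial vertex lies in $\mathcal{P}(G)$, giving $\mathcal{P}(G)=\mathcal{F}(G)$ and $\mu_t(G)=\n(G)-|\mathcal{F}(G)|=\n(G)-|\mathcal{P}(G)|$.
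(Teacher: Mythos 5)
Your proof is correct and follows essentially the same route as the paper: (i), (iii), (iv) fall out of Propositions~\ref{prop:tmv-general-bounds} and~\ref{general-bounds-equality} together with $\mathcal{S}(G)\subseteq\mathcal{C}(G)$ and $\mathcal{P}(G)\subseteq\mathcal{F}(G)$, and (ii) is proved by showing that any non-simplicial vertex $v\notin\mathcal{P}(G)$ could be added to $\mathcal{S}(G)$ by rerouting shortest paths through a non-simplicial common neighbour $w$ of the two path-neighbours of $v$. The only difference is that you spell out the path-replacement step (and why the rerouted walk is again a shortest path) that the paper leaves implicit.
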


\begin{proof}
From Propositions~\ref{prop:tmv-general-bounds} and \ref{general-bounds-equality} we deduce (i),   (iii) and (iv). We proceed to prove (ii). Since $\mathcal{S}(G)\subseteq \mathcal{C}(G)$, if $\mu_t(G)=|\mathcal{S}(G)|$, then the set $\mathcal{S}(G)$ of simplicial vertices is the only $\mu_t(G)$-set. If there exists a vertex $x\in V(G)\setminus (\mathcal{S}(G)\cup \mathcal{P}(G))$, then for every pair of non-adjacent vertices $y,z\in N_G(x)$, there exists $w\in V(G)\setminus \mathcal{S}(G)$ such that $y,z\in N_G(w)$, and so $\mathcal{S}(G)\cup \{x\}$ is a total mutual-visibility set, which is a contradiction. Therefore, $V(G)=\mathcal{S}(G)\cup \mathcal{P}(G)$, as required.
\end{proof}

The graph $G$ shown in Figure~\ref{FigConnectedDomination} is also an example which illustrates Proposition~\ref{pro:tmv-general-bounds-equality} (ii). The set $\mathcal{S}(G)$ is formed by the bold  vertices, and $\mathcal{P}(G)$ by the white ones. Moreover, as previously mentioned, $\mu_t(G)=2$.

As we can expect, the converse of Proposition~\ref{pro:tmv-general-bounds-equality} (ii) does not hold. For instance, if $G$ is the graph shown in Figure~\ref{FigureN-Compulsory}, then  $\mathcal{S}(G)=\{a,g\}\subseteq \{a,d,g,i\}=\mathcal{C}(G)$ and $\mathcal{P}(G)=V(G)\setminus \mathcal{C}(G)$. Therefore, $\mu_t(G) = \n(G)-|\mathcal{P}(G)|=|\mathcal{C}(G)|=4>2=|\mathcal{S}(G)|.$

Notice also that the converse of Proposition~\ref{pro:tmv-general-bounds-equality} (iv) does not hold. For instance, if $G$ is the graph shown in Figure~\ref{Figure-Forbiden}, then $\mathcal{P}(G)=\{b,c,e,f,h,j\}\subseteq \{b,c,e,f,h,j,k\}=\mathcal{F}(G)$. In this case, $\mu_t(G) = |\mathcal{C}(G)|=\n(G)-|\mathcal{F}(G)|=6<7=\n(G)-|\mathcal{P}(G)|.$

\begin{figure}[h]
\begin{center}
\begin{tikzpicture}[transform shape, inner sep = .6mm]

\node [draw=black, shape=circle, fill=black] (a) at  (-3,0) {};
\node [draw=black, shape=circle, fill=white] (b) at  (-2,0) {};
\node [draw=black, shape=circle, fill=white] (c) at  (-1,-1) {};
\node [draw=black, shape=circle, fill=black] (d) at  (0,-2) {};
\node [draw=black, shape=circle, fill=white] (e) at  (1,-1) {};
\node [draw=black, shape=circle, fill=white] (f) at  (2,0) {};
\node [draw=black, shape=circle, fill=black] (g) at  (3,0) {};
\node [draw=black, shape=circle, fill=white] (h) at  (1,1) {};
\node [draw=black, shape=circle, fill=black] (i) at  (0,2) {};
\node [draw=black, shape=circle, fill=white] (j) at  (-1,1) {};
\node [draw=black, shape=circle, fill=white] (k) at  (0,0) {};
\node [draw=black, shape=circle, fill=black] (l) at  (0,1) {};
\node [draw=black, shape=circle, fill=black] (m) at  (0,-1) {};
\draw (a)--(b)--(c)--(d)--(e)--(f)--(g);
\draw (f)--(h)--(i)--(j)--(j)--(b);
\draw (c)--(k)--(h);
\draw (c)--(l)--(h);
\draw (e)--(k)--(j);
\draw (e)--(m)--(j);

\node [above] at (-3,0.05) {$^{a}$};
\node [above] at (-2,0.05) {$^{b}$};
\node [below] at (-1,-1.1) {$^{c}$};
\node [below] at (0,-2.1) {$^{d}$};
\node [below] at (1,-1.1) {$^{e}$};
\node [above] at (2,0.1) {$^{f}$};
\node [above] at (3,0.1) {$^{g}$};
\node [above] at (1,1.05) {$^{h}$};
\node [above] at (0,2.02) {$^{i}$};
\node [above] at (-1,1.05) {$^{j}$};
\node [above] at (0,1.02) {$^{l}$};
\node [below] at (0,-1.06) {$^{m}$};
\node [above] at (0,0.06) {$^{k}$};

\end{tikzpicture}\caption{$\mathcal{C}(G)=\{a,d,g,i,l,m\}$ is the only  $\mu_t(G)$-set.} \label{Figure-Forbiden}
\end{center}
\end{figure}
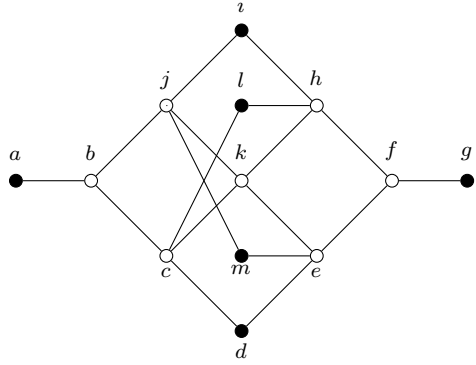

Since the vertex set of any non-complete block graph $G$ can be partitioned as $V(G)= \mathcal{P}(G)\cup \mathcal{S}(G)$, we deduce the following result.

\begin{corollary}
\label{cor:tmv-exact-values}
If $G$ is a block graph, then $\mu_t(G) = |\mathcal{S}(G)|=\n(G)-|\mathcal{P}(G)|$.
\end{corollary}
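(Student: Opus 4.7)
The plan is to reduce the corollary to the structural fact (alluded to in the sentence just before the statement) that in a non-complete block graph every vertex is either simplicial or lies in $\mathcal{P}(G)$. Given this partition, the desired equalities fall out from the squeeze provided by Proposition~\ref{pro:tmv-general-bounds-equality}.

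First I would dispose of the trivial case $G=K_{\n(G)}$, where $\mathcal{S}(G)=V(G)$, $\mathcal{P}(G)=\varnothing$, and Corollary~\ref{cor:tmv=n}(i) yields $\mu_t(G)=\n(G)=|\mathcal{S}(G)|=\n(G)-|\mathcal{P}(G)|$. For a non-complete block graph $G$, combining parts~(i) and~(iii) of Proposition~\ref{pro:tmv-general-bounds-equality} gives
$$|\mathcal{S}(G)|\;\le\;\mu_t(G)\;\le\;\n(G)-|\mathcal{P}(G)|.$$
Thus it suffices to prove $\n(G)\le |\mathcal{S}(G)|+|\mathcal{P}(G)|$, which will collapse the chain to equalities throughout.

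For this last step I would invoke the standard structural fact about block graphs: each vertex of $G$ either belongs to a unique block (and is therefore simplicial) or is a cut vertex. The heart of the argument is to show that every cut vertex $v$ of $G$ satisfies $v\in\mathcal{P}(G)$. Pick two distinct blocks $B_1,B_2$ containing $v$ and choose $u\in B_1\setminus\{v\}$, $w\in B_2\setminus\{v\}$; both are neighbors of $v$, so $v\in N_G[u]\cap N_G[w]$. Since $v$ separates $u$ from $w$ (it is a cut vertex and $u,w$ sit in different components of $G-v$), any common neighbor $x$ of $u$ and $w$ would produce a $u,w$-path $u,x,w$ avoiding $v$, a contradiction; in particular $u$ and $w$ are non-adjacent. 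Hence $N_G[u]\cap N_G[w]=\{v\}$ and $v\in\mathcal{P}(G)$, as required.

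The main obstacle is essentially none once the structural characterization of block graphs is on the table; the only subtle checks are that two distinct blocks meet in at most a single vertex (so that $u\ne w$ can indeed be selected and they cannot lie in a common block) and that the cut-vertex property really prevents any length-two detour between $u$ and $w$. Both are immediate consequences of the definition of a block, so the proof is really a bookkeeping exercise on top of Proposition~\ref{pro:tmv-general-bounds-equality}.
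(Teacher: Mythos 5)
Your proof is correct and follows the same route the paper intends: the paper simply asserts the partition $V(G)=\mathcal{S}(G)\cup\mathcal{P}(G)$ for non-complete block graphs and squeezes $\mu_t(G)$ between the bounds of Proposition~\ref{pro:tmv-general-bounds-equality}\,(i) and (iii), exactly as you do. Your only addition is to actually verify that every cut vertex of a block graph lies in $\mathcal{P}(G)$ (and to handle the complete case via Corollary~\ref{cor:tmv=n}), which is a correct filling-in of the detail the paper leaves implicit.
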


\section{The case of lexicographic product  graphs}
\label{sec:lexicographic-product}
The lexicographic product $G\circ H$ of two graphs $G$ and $H$ is a graph with vertex set $V(G\circ H)=V(G)\times V(H)$. Two vertices $(x,y)$ and $(x',y')$ are adjacent if $xx'\in E(G)$ or ($x=x'$ and $yy'\in E(H$)). The lexicographic product is a kind of generalization of join because $K_2\circ G\cong G + G$ for any graph $G$. If $S\subseteq V(G\circ H)$, then the projection $S_G$ of $S$ on $G$ is the set $\{g\in V(G): (g,h)\in S \textrm{ for some } h\in V(H)\}$. The projection $S_H$ of $S$ on $H$ is defined analogously.

Since the  concept of total mutual-visibility is defined for connected graphs, we should remember that a lexicographic product $G\circ H$ is connected if and only if $G$ is connected. Moreover, the relation between distances in a lexicographic product  graph  and in its factors can be presented as follows.

\begin{remark}{\em \cite{hammack-2011}}
If $G$ is a connected non-trivial graph, then the distance between two vertices  $(g,h)$ and $(g',h')$   of $G\circ H$ is given by
$$d_{G\circ H}((g,h),(g',h'))=\left\{\begin{array}{ll}

                            d_G(g,g')  & \mbox{if $g\ne g'$,} \\
                            [10pt]
                            \min\{d_H(h,h'),2\}  & \mbox{if $g=g'$.}
                            \end{array} \right.\\
$$
\end{remark}

For more on the product graphs see \cite{hammack-2011}.

\begin{lemma}
\label{lem:tmv-all-but-one}
Let $G$ be a graph with $\gamma(G)\ge 2$ and let $H$ be a graph. If $X$ is a $\mu_t(G\circ H)$-set, then $|\{u\}\times V(H)\cap X|\ge \n(H)-1$ for every vertex $u\in V(G)$.
\end{lemma}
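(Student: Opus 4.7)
The plan is to argue by contradiction: suppose $X$ is a $\mu_t(G\circ H)$-set but some fiber $\{u\}\times V(H)$ contains two distinct vertices $(u,h_1)$ and $(u,h_2)$ outside $X$. I will show that $X\cup\{(u,h_1)\}$ is again a total mutual-visibility set of $G\circ H$, contradicting the maximality of $X$. To this end, I fix an arbitrary pair $p,q\in V(G\circ H)$, start from a shortest $pq$-path $P$ whose interior misses $X$ (provided by $X$ being total mutual-visibility), and modify $P$, if necessary, into a shortest $pq$-path whose interior also avoids $(u,h_1)$.

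The key structural tool is the explicit description of shortest paths in $G\circ H$ furnished by the distance formula recalled just before the lemma. If the $G$-coordinates of $p$ and $q$ differ, then the $G$-coordinates of $P$ form a shortest path in $G$, which visits $u$ at most once, while each interior $H$-coordinate can be chosen freely in $V(H)$; so if some interior vertex of $P$ equals $(u,h_1)$ I simply replace its $H$-coordinate by $h_2$, yielding a shortest $pq$-path whose interior avoids $X\cup\{(u,h_1)\}$. If instead $p$ and $q$ share the $G$-coordinate $a$, the only case with a non-trivial interior is $bb'\notin E(H)$ with distance two, whose unique interior vertex is of the form $(a,w)$ with $w\in N_H(b)\cap N_H(b')$ or of the form $(w_1,w_2)$ with $w_1\in N_G(a)$ and $w_2\in V(H)$; when $a\ne u$, the same swap $(u,h_1)\mapsto(u,h_2)$ works because $u\in N_G(a)$.

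The main obstacle is the remaining subcase $p=(u,b)$, $q=(u,b')$, in which the endpoints lie on the same fiber as $(u,h_1)$ and the swap idea breaks down. Assuming for contradiction that no valid interior lies outside $X\cup\{(u,h_1)\}$, I obtain simultaneously that every $w\in N_H(b)\cap N_H(b')\setminus\{h_1\}$ satisfies $(u,w)\in X$ and that $\{w_1\}\times V(H)\subseteq X$ for every $w_1\in N_G(u)$. This is precisely where $\gamma(G)\ge 2$ enters: it guarantees some $u'\in V(G)$ with $d_G(u,u')\ge 2$, and then applying the total mutual-visibility property of $X$ to the pair $(u,h_1),(u',h_1)$ produces a shortest path of length at least two whose first interior vertex must have $G$-coordinate in $N_G(u)$ and hence lies in $X$ by the second condition---contradicting that this path's interior is disjoint from $X$. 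Clearing this subcase is the delicate step; the remainder of the argument is a routine unwinding of the distance formula in a lexicographic product.
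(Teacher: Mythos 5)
Your proof is correct and takes essentially the same approach as the paper: the paper's (much terser) argument likewise observes that if two vertices of the fiber $\{u\}\times V(H)$ lie outside $X$, maximality forces $\{u'\}\times V(H)\subseteq X$ for every $u'\in N_G(u)$, and then uses $\gamma(G)\ge 2$ to produce a vertex $w\notin N_G[u]$ so that $(u,v)$ and $(w,v)$ fail to be $X$-visible. Your careful case analysis of shortest paths in $G\circ H$ just makes explicit the steps the paper leaves implicit.
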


\begin{proof}
Let $u\in V(G)$. By the maximality of the cardinality of $X$, if there exist two different vertices $v, v'\in V(H)$ such that $(u,v),(u,v')\notin X$, then $\{u'\}\times V(H)\subseteq X$ for every $u'\in N_G(u)$. Now, since $\gamma(G)\ge 2$, there exists $w\in V(G)\setminus N_G[u]$, and so  two vertices $(u,v)$ and $(w,v)$ are not $X$-visible, which is a contradiction. Therefore, the result follows.
\end{proof}

\begin{lemma}
\label{lem:tmv-neigh-out}
Let $G$ be a graph with $\gamma(G)\ge 2$. For any $\mu_t(G)$-set $X$ and any vertex $v\in V(G)$, $N_G(v)\cap (V(G)\setminus X)\ne \varnothing$.
\end{lemma}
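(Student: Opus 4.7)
The plan is to argue by contradiction: assume there exists a vertex $v \in V(G)$ whose entire open neighborhood is contained in $X$, and derive a contradiction with the fact that $X$ is a total mutual-visibility set.

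First I would use the hypothesis $\gamma(G) \ge 2$ to locate a suitable ``distant'' vertex. Since $\{v\}$ is not a dominating set of $G$, there must exist $w \in V(G) \setminus N_G[v]$, and in particular $d_G(v,w) \ge 2$. This is exactly the role of the domination hypothesis; without it (i.e.\ if $v$ were a universal vertex), one could have $N_G(v) \subseteq X$ without any obstruction to $X$-visibility.

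Next I would inspect the structure of any shortest $v,w$-path. Because $d_G(v,w) \ge 2$, such a path has at least one internal vertex, and its very first internal vertex is a neighbor of $v$, hence lies in $N_G(v) \subseteq X$. Consequently every shortest $v,w$-path contains an internal vertex belonging to $X$, so $v$ and $w$ are not $X$-visible. This contradicts the assumption that $X$ is a total mutual-visibility set of $G$ (which requires $X$-visibility for every pair of vertices of $G$, not merely for pairs inside $X$). Therefore $N_G(v) \not\subseteq X$, i.e., $N_G(v) \cap (V(G) \setminus X) \neq \varnothing$.

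There is no real obstacle here; the only delicate point is the observation that the definition of $X$-visibility restricts only the \emph{internal} vertices of the chosen shortest path, so the argument works regardless of whether $v$ or $w$ themselves lie in $X$. The distance-two case ($d_G(v,w)=2$) and the longer-distance case are handled uniformly by the ``first-internal-vertex'' observation.
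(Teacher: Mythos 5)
Your proof is correct and follows essentially the same argument as the paper: assume $N_G(v)\subseteq X$, use $\gamma(G)\ge 2$ to find $w\notin N_G[v]$, and observe that every shortest $v,w$-path passes through an internal vertex in $N_G(v)\subseteq X$, contradicting $X$-visibility of $v$ and $w$. Your remark that visibility must hold for all pairs of vertices (not just pairs in $X$) correctly identifies the point on which the argument hinges.
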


\begin{proof}
Suppose that there exits a $\mu_t(G)$-set $X$, and a vertex $v\in V(G)$, such that $N_G(v)\subseteq X$. Since $\gamma(G)\ge 2$, there exists  $u\in V(G)\setminus N_G[v]$. Hence,  $u$ and $v$ are not $X$-visible, which is a contradiction.
\end{proof}

\begin{theorem}
Let $G$ be a connected graph with $\gamma(G)\ge 2$. For any graph $H$, $$\mu_t(G\circ H) = \n(G)(\n(H)-1) + \mu_t(G).$$
\end{theorem}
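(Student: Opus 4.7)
The plan is to prove the equality by matching upper and lower bounds, in both directions leveraging Lemma~\ref{lem:tmv-all-but-one}, which forces every fiber $\{u\}\times V(H)$ to miss at most one vertex of any $\mu_t(G\circ H)$-set.

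For the lower bound, I would fix a $\mu_t(G)$-set $S^{*}$ and a vertex $h_{0}\in V(H)$, and propose the candidate
$$X=(S^{*}\times V(H))\cup\bigl((V(G)\setminus S^{*})\times(V(H)\setminus\{h_{0}\})\bigr),$$
so that $|X|=|S^{*}|\n(H)+(\n(G)-|S^{*}|)(\n(H)-1)=\n(G)(\n(H)-1)+\mu_t(G)$. Verifying that $X$ is a total mutual-visibility set reduces to analysing an arbitrary pair $(g,h),(g',h')$. If $g\ne g'$, I would invoke the total mutual-visibility of $S^{*}$ in $G$ to take a shortest $g$-$g'$-path $g=g_{0},\dots,g_{k}=g'$ with $g_{1},\dots,g_{k-1}\notin S^{*}$, and lift it to $(g_{0},h),(g_{1},h_{0}),\dots,(g_{k-1},h_{0}),(g_{k},h')$; each internal vertex $(g_{i},h_{0})$ then lies outside $X$ by the choice of $h_{0}$. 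If $g=g'$ with $d_{H}(h,h')\ge 2$, the product distance is $2$, and Lemma~\ref{lem:tmv-neigh-out} supplies $u\in N_{G}(g)\setminus S^{*}$, so $(g,h),(u,h_{0}),(g,h')$ is a valid length-$2$ path whose interior vertex avoids $X$. The remaining sub-cases ($h=h'$ or $hh'\in E(H)$) are vacuous.

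For the upper bound, I would take an arbitrary $\mu_t(G\circ H)$-set $X$ and set
$$S=\{u\in V(G):\{u\}\times V(H)\subseteq X\}.$$
By Lemma~\ref{lem:tmv-all-but-one}, every fiber with $u\notin S$ contributes exactly $\n(H)-1$ vertices to $X$, and hence $|X|=\n(G)(\n(H)-1)+|S|$. It therefore suffices to show $|S|\le\mu_t(G)$, which I would do by proving that $S$ is a total mutual-visibility set of $G$. Given $g\ne g'$ in $V(G)$, pick any $h\in V(H)$ and consider a shortest $(g,h)$-$(g',h)$-path in $G\circ H$ whose internal vertices avoid $X$; its length is $d_{G}(g,g')$. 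A short combinatorial argument then shows that every edge of this path must be a $G$-edge (a length-$d_{G}(g,g')$ walk in $G\circ H$ between different $G$-fibers cannot afford any step that fixes the $G$-coordinate), so projecting gives a shortest $g$-$g'$-path in $G$ whose internal coordinates cannot belong to $S$, since otherwise the corresponding fiber vertex on the product path would lie in $X$.

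The main obstacle is the projection step in the upper bound: making precise that any shortest $(g,h)$-$(g',h')$-path in $G\circ H$ with $g\ne g'$ projects onto a shortest $g$-$g'$-path in $G$, together with an arbitrary labelling of its interior by $V(H)$. Once this lifting/projection correspondence is stated and Lemmas~\ref{lem:tmv-all-but-one} and~\ref{lem:tmv-neigh-out} are in place, both directions collapse to simple bookkeeping on fibers.
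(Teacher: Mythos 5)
Your proposal is correct and follows essentially the same route as the paper: the same candidate set $X=(S^{*}\times V(H))\cup((V(G)\setminus S^{*})\times(V(H)\setminus\{h_0\}))$ for the lower bound (using Lemma~\ref{lem:tmv-neigh-out} for the within-fiber case and a lifted geodesic for the cross-fiber case), and for the upper bound the same fiber count via Lemma~\ref{lem:tmv-all-but-one} combined with projecting a $W$-avoiding geodesic to show that the set of full fibers projects to a total mutual-visibility set of $G$. The projection step you flag as the main obstacle is exactly the one the paper uses (and states with less justification than you give), so there is nothing further to fix.
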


\begin{proof}
Let $S$ be a $\mu_t(G)$-set and let $v\in V(H)$. We proceed to show that $X = S\times V(H) \cup (V(G)\setminus S)\times (V(H)\setminus \{v\})$ is a total mutual-visibility set. We differentiate two cases for $(u,v),(u',v')\in V(G\circ H)$.

\noindent Case 1: $u=u'$. By Lemma \ref{lem:tmv-neigh-out} there exists a vertex $w\in N_G(u)\cap (V(G)\setminus S)$. Thus, the  vertices $(u,v)$ and $(u',v')$ are $X$-visible.

\noindent Case 2: $u\ne u'$. Let $u=u_0,u_1,\dots, u_k=u'$ be a shortest path in $G$ such that $\{u_1,\dots,u_{k-1}\}\cap S =\varnothing$. Since $(u,v)=(u_0,v),(u_1,v),\dots,(u_k,v)=(u',v')$ is a shortest path in $G\circ H$ and $\{u_1,\dots, u_{k-1}\}\times \{v\}\cap X = \varnothing$, the vertices $(u,v)$ and $(u',v')$ are $X$-visible.

Therefore, $X$ is a total mutual-visibility set, and so $$\mu_t(G\circ H)\ge |X| = \n(G)(\n(H)-1) + \mu_t(G).$$

Let $W$ be a $\mu_t(G\circ H)$-set and $W'=V(G\circ H)\setminus W$. Let $W'_G$ be the projection of $W'$ in $G$. We proceed to show that $W_G=V(G)\setminus W'_G$ is a total mutual-visibility set of $G$. For every pair of different vetices $x,x'\in V(G)$ and $y\in V(H)$, there exists a shortest path $(x,y)=(x_0,y_0),\dots, (x_k,y_k)=(x',y)$ such that $(x_1,y_1),\dots, (x_{k-1},y_{k-1})\notin W$. Thus, $x=x_0,\dots, x_k=x'$ is a shortest path in $G$ and $x_1,\dots, x_{k-1}\notin W_G$. Hence, the pair of vertices $x$ and $x'$ are $W_G$-visible and so $|W_G|\le \mu_t(G)$. Therefore, by Lemma \ref{lem:tmv-all-but-one} we have that
\begin{align*}
  \mu_t(G\circ H) & =|W| \\
                  & =|W_G|\cdot\n(H)+|W'_G|(\n(H)-1) \\
                  & =\n(G)(\n(H)-1)+|W_G| \\
                  & \le \n(G)(\n(H)-1) + \mu_t(G),
\end{align*}
as required.
\end{proof}

\begin{theorem}
Let $G$ be a graph with $\gamma(G)=1$.
\begin{itemize}
  \item [{\rm (i)}] If $H$ is a non-complete graph with $\gamma(H)=1$, then $$\mu_t(G\circ H)= \n(G)\n(H)-1.$$
  \item [{\rm (ii)}] If $H$ is a graph with $\gamma(H)\ge 2$, then $$\mu_t(G\circ H)= \n(G)\n(H)-2.$$
\end{itemize}
\end{theorem}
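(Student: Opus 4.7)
My plan is to handle parts (i) and (ii) separately, exploiting in both the following characterization of universal vertices in $G \circ H$: a direct check of the adjacency rule shows that $(g, h)$ is a universal vertex of $G \circ H$ if and only if $g$ is universal in $G$ and $h$ is universal in $H$.

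For (i), I will apply Corollary~\ref{cor:tmv=n}(ii). Taking universal vertices $u \in V(G)$ and $h \in V(H)$ (which exist since $\gamma(G) = \gamma(H) = 1$), the characterization above gives $\gamma(G \circ H) = 1$. Non-completeness of $G \circ H$ follows from non-completeness of $H$, since any two non-adjacent vertices of $H$ lift (along a common first coordinate) to non-adjacent vertices of $G \circ H$. The corollary then yields $\mu_t(G \circ H) = \n(G)\n(H) - 1$.

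For the upper bound in (ii), I will argue that $G \circ H$ satisfies neither extremal condition in Corollary~\ref{cor:tmv=n}. Since $\gamma(H) \ge 2$, $H$ has no universal vertex, so by the characterization neither does $G \circ H$; hence $\gamma(G \circ H) \ge 2$. Also, $H$ is non-complete (complete graphs have $\gamma = 1$), and this lifts to $G \circ H$ as before. The corollary then excludes both $\mu_t = \n$ and $\mu_t = \n - 1$, giving $\mu_t(G \circ H) \le \n(G)\n(H) - 2$.

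For the matching lower bound in (ii) (tacitly assuming $\n(G) \ge 2$, as elsewhere in this section), I will exhibit an explicit total mutual-visibility set of the desired size. Fix a universal vertex $u \in V(G)$, any $u' \in V(G) \setminus \{u\}$ (so $uu' \in E(G)$), and any $h^* \in V(H)$, and let $X = V(G \circ H) \setminus \{(u, h^*), (u', h^*)\}$. Since $\gamma(G) = 1$ gives $\diam(G) \le 2$, the distance formula forces $\diam(G \circ H) \le 2$, so I only need to show that every distance-$2$ pair admits a length-$2$ shortest path whose internal vertex lies in $\{(u, h^*), (u', h^*)\}$. For such a pair $(a, b), (a', b')$ I split into three cases: (a) if $a \ne a'$, then $d_G(a, a') = 2$ forces $u \notin \{a, a'\}$, so $(u, h^*)$ is a common neighbor distinct from both endpoints; (b) if $a = a' \ne u$, then $(u, h^*)$ is again a common neighbor via $ua \in E(G)$, distinct from the endpoints; (c) if $a = a' = u$, then $b, b'$ are non-adjacent in $H$, and $(u', h^*)$ works as middle vertex via $u'u \in E(G)$. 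The hardest step is case (c): the natural candidate $(u, h^*)$ has first coordinate $u$ (matching both endpoints) and would require $h^*$ to be a common $H$-neighbor of $b, b'$, which need not hold; including $(u', h^*)$ in the removed set rather than two vertices with first coordinate $u$ is precisely what lets the universality of $u$ in $G$ resolve this subcase through $u'$.
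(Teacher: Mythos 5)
Your proposal is correct and follows essentially the same route as the paper: part (i) via Corollary~\ref{cor:tmv=n} applied to the universal vertex $(u,v)$, and part (ii) by combining the upper bound $\gamma(G\circ H)\ge 2$ with the same explicit set $V(G\circ H)\setminus\{(u,h^*),(u',h^*)\}$ for the lower bound. The only difference is that you spell out the case analysis the paper dismisses as ``readily seen,'' and you correctly flag the implicit assumption $\n(G)\ge 2$.
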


\begin{proof}
Let $u$ be a universal vertex of $G$.
If $v$ is a a universal vertex of $H$, then $(u,v)$ is a universal vertex of $G\circ H$, and so by Corollary~\ref{cor:tmv=n} we conclude that (i) follows.

In order to prove (ii), we take  an arbitrary vertex $u'\in V(G)\setminus \{u\}$ and an arbitrary vertex $v\in V(H)$, and since $u$ is a universal vertex of $G$, it is readily seen that $ X=V(G\circ H)\setminus \{(u,v),(u',v)\}$ is a total mutual-visibility set of $G\circ H$. Hence,  $\mu_t(G\circ H)\ge |X| = \n(G)\n(H)-2$. Now, if $\gamma(H)\ge 2$, then $\gamma(G\circ H)\ge 2$ and, as a result, Corollary~\ref{cor:tmv=n} leads to $\mu_t(G\circ H) \le \n(G)\n(H)-2$, which completes the proof of (ii).
\end{proof}

\section{The case of Cartesian product graphs}
\label{sec:tmv-Cartesian}

Let $G$ and $H$ be two graphs. The Cartesian product of $G$ and $H$   is the graph $G\Box H$ with $V(G\Box H)=V(G)\times V(H)$, where two vertices $(x,y)$ and $(x',y')$ are adjacent if and only if either $x=x'$ and $yy'\in E(H)$, or $xx'\in E(G)$ and $y=y'$. A Cartesian product graphs is connected if and only if both of its factors are connected. The distance between $(x,y)$ and $(x',y')$ in $G\Box H$ is given by
$$d_{G\Box H}((x,y),(x',y'))=d_G(x,x')+d_H(y,y').$$ For more information on structure and properties of the Cartesian product of graphs we refer the reader to \cite{hammack-2011}.

We recall that the general position and the mutual-visibility problems were investigated recently, for instance in \cite{ghorbani-2021, klavzar-2021, tian-2021} and \cite{cicerone-2023}, respectively. Moreover, Tian and Klav\v{z}ar \cite{tian-2022+} most recently studied the total mutual-visibility number of Cartesian product graphs. In the referred work, the authors gave general bounds on the total mutual-visibility number
of Cartesian product graphs, and they also obtain closed formulas on this novel parameter for specific families of Cartesian product graphs. To continue our exposition we mention these bounds. For this sake, we need to introduce the following concept defined in \cite{tian-2022+}. An \emph{independent total mutual-visibility set} of a graph $G$ is a set of vertices in $G$ that is both an independent set and a total mutual-visibility set of $G$. The cardinality of a largest independent total mutual-visibility set is the \emph{independent total
mutual-visibility number} of $G$, denoted by $\mu_{it}(G)$.

\begin{theorem}{\em \cite{tian-2022+}}
\label{th:cart-bounds}
If $G$ and $H$ are graphs with $n(G)\ge 2$, $n(H)\ge 2$, $\mu_{it}(G)\ge 1$, and $\mu_{it}(H)\ge 1$, then
$$\max\{\mu_{it}(H) \mu_t(G), \mu_{it}(G) \mu_t(H)\}\le \mu_t(G\Box H)\le \min\{n(G) \mu_t(H), n(H) \mu_t(G)\}.$$
\end{theorem}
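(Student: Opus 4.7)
The plan is to prove the two inequalities separately, using projections onto layers for the upper bound and a direct product-of-sets construction for the lower bound. Since the roles of $G$ and $H$ are symmetric in the statement, in each case it will suffice to verify one of the two symmetric inequalities.

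For the upper bound $\mu_t(G\Box H)\le n(G)\mu_t(H)$, I would take a $\mu_t(G\Box H)$-set $S$ and, for every $g\in V(G)$, examine the $H$-layer projection $S_g=\{h\in V(H):(g,h)\in S\}$. The key preliminary observation is that any shortest path in $G\Box H$ between two vertices $(g,h)$ and $(g,h')$ of the same $H$-layer stays inside $\{g\}\times V(H)$; this follows from $d_{G\Box H}((g,h),(g,h'))=d_H(h,h')$ together with the fact that each edge of $G\Box H$ changes exactly one coordinate, so any $G$-direction step would have to be balanced out and would exceed the length budget. Hence a shortest $(g,h)$-$(g,h')$-path whose interior avoids $S$ projects to a shortest $h$-$h'$-path in $H$ whose interior avoids $S_g$, so each $S_g$ is a total mutual-visibility set of $H$, giving $|S|=\sum_{g\in V(G)}|S_g|\le n(G)\mu_t(H)$.

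For the lower bound $\mu_t(G\Box H)\ge \mu_{it}(H)\mu_t(G)$, I would pick a $\mu_t(G)$-set $A$ and an independent $\mu_{it}(H)$-set $B$ and show that $S:=A\times B$ is a total mutual-visibility set of $G\Box H$, which would immediately yield the lower bound since $|S|=\mu_t(G)\mu_{it}(H)$. Given two vertices $(x,y),(x',y')$ of $G\Box H$, choose shortest paths $x_0,\ldots,x_k$ in $G$ and $y_0,\ldots,y_l$ in $H$ whose interiors miss $A$ and $B$, respectively. For $k=0$ or $l=0$ the obvious single-layer path suffices, while for $k,l\ge 1$ the natural L-shaped path $(x_0,y_0),(x_1,y_0),\ldots,(x_k,y_0),(x_k,y_1),\ldots,(x_k,y_l)$ has all interior vertices outside $A\times B$ except possibly the corner $(x',y)$, and switching to the other L-shape replaces this bad point by $(x,y')$; so we can always succeed unless both corners lie in $S$, that is, unless $x,x'\in A$ and $y,y'\in B$.

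The main obstacle is precisely this double-corner case, and the hypothesis that $B$ is independent enters decisively here: since $y,y'\in B$ and $B$ is independent in $H$, these two vertices are non-adjacent and so $l=d_H(y,y')\ge 2$. With this extra room I would use the staircase shortest path
\begin{equation*}
(x_0,y_0),(x_0,y_1),(x_1,y_1),(x_2,y_1),\ldots,(x_k,y_1),(x_k,y_2),\ldots,(x_k,y_l),
\end{equation*}
and check that every interior vertex either has $H$-coordinate $y_j$ with $1\le j\le l-1$ (hence $y_j\notin B$, by the choice of the $H$-path) or has $G$-coordinate $x_i$ with $1\le i\le k-1$ (hence $x_i\notin A$, by the choice of the $G$-path); in either case the vertex lies outside $A\times B$. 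This completes the verification that $A\times B$ is a total mutual-visibility set of $G\Box H$, yielding $\mu_t(G\Box H)\ge \mu_t(G)\mu_{it}(H)$; the dual bound $\mu_{it}(G)\mu_t(H)$ and the dual upper bound $n(H)\mu_t(G)$ both follow by interchanging the roles of $G$ and $H$.
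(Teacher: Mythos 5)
This theorem is quoted in the paper from the reference by Tian and Klav\v{z}ar, and the authors reproduce it without proof, so there is no in-paper argument to compare yours against line by line. Judged on its own, your proof is correct and complete. The upper bound is handled exactly as one would expect: the $H$-layers $\{g\}\times V(H)$ are convex in $G\Box H$ (your length-budget argument for why a shortest path between two vertices of a layer cannot take a $G$-step is sound), so each projection $S_g$ is a total mutual-visibility set of $H$ and the sum over $g$ gives $n(G)\mu_t(H)$. For the lower bound, you correctly identify that the only obstruction to the two L-shaped geodesics is the case $x,x'\in A$, $y,y'\in B$, and this is precisely where the independence of $B$ earns its keep: it forces $d_H(y,y')\ge 2$, so the staircase geodesic that turns at $(x_0,y_1)$ and $(x_k,y_1)$ has every interior vertex with either an interior $G$-coordinate (hence outside $A$) or an interior $H$-coordinate (hence outside $B$). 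This is the standard and, as far as I can tell, essentially the intended argument for this bound; no gaps.
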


In order to present a result which improves the upper bound mentioned above, we need to state the following lemma.

\begin{lemma}
\label{NotBypassInG-NOtInCartesian}
For any connected graph $G$ of order at least two,  $$\mathcal{P}(G)\times V(H)\subseteq \mathcal{P}(G\Box H).$$
\end{lemma}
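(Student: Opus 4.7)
The plan is to verify the inclusion pointwise: fix a vertex $v\in\mathcal{P}(G)$ and an arbitrary $h\in V(H)$, and exhibit two vertices of $G\Box H$ whose closed neighborhoods intersect in exactly $\{(v,h)\}$. Since $v\in\mathcal{P}(G)$, by definition there exist $u,w\in V(G)$ with $N_G[u]\cap N_G[w]=\{v\}$. The natural candidates in the product are $(u,h)$ and $(w,h)$, and I would show that
$$N_{G\Box H}[(u,h)]\cap N_{G\Box H}[(w,h)]=\{(v,h)\}.$$

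The key step is to unfold the closed neighborhoods via the adjacency rule for the Cartesian product. Recall that $(x,y)\in N_{G\Box H}[(u,h)]$ iff either $y=h$ and $x\in N_G[u]$, or $x=u$ and $y\in N_H[h]$; analogously for $(w,h)$. I would then split into two cases according to whether the second coordinate of a vertex $(x,y)$ in the intersection equals $h$. In the case $y=h$, both memberships force $x\in N_G[u]\cap N_G[w]=\{v\}$, giving $(x,y)=(v,h)$. In the case $y\neq h$, membership in the first set forces $x=u$ (together with $y\in N_H(h)$), while membership in the second forces $x=w$; hence $u=w$, which contradicts $N_G[u]\cap N_G[w]=\{v\}$ (if $u=w$ then this intersection is $N_G[u]$, a set of size at least two since $G$ is connected of order $\ge 2$). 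Conversely, the containment $(v,h)\in N_{G\Box H}[(u,h)]\cap N_{G\Box H}[(w,h)]$ is immediate because $v\in N_G[u]$ and $v\in N_G[w]$.

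Once the intersection equality is established, $(v,h)\in\mathcal{P}(G\Box H)$ by definition. Since $v\in\mathcal{P}(G)$ and $h\in V(H)$ were arbitrary, the inclusion $\mathcal{P}(G)\times V(H)\subseteq\mathcal{P}(G\Box H)$ follows. I do not expect a serious obstacle here: the only subtlety is the degenerate possibility $u=w$, which is ruled out by the hypothesis that $G$ has order at least two and is connected (so $N_G[u]$ has at least two elements and cannot equal the singleton $\{v\}$).
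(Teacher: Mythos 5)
Your proposal is correct and follows essentially the same route as the paper: it takes the witnesses $u,w$ for $v\in\mathcal{P}(G)$ and verifies that $N_{G\Box H}[(u,h)]\cap N_{G\Box H}[(w,h)]=\{(v,h)\}$, which is exactly the identity the paper computes (there by a direct set-theoretic expansion of the product neighborhoods rather than your case split on the second coordinate). Your explicit handling of the degenerate case $u=w$ is a sound extra check that the paper leaves implicit.
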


\begin{proof}
If $x\in \mathcal{P}(G)$, then there exist two vertices  $g,g'\in V(G)\setminus \{x\}$ such that $N_G[g]\cap N_G[g']=\{x\}$. Hence, for any vertex $y\in V(H)$,
\begin{align*}
\{(x,y)\}&=N_G[g]\cap N_G[g']\times \{y\}
\\ & =(N_G[g]\times \{y\}\cup \{g\}\times N_H[y])\cap (N_G[g']\times \{y\}\cup \{g'\}\times N_H[y])\\
&=N_{G\Box H}[(g,y)]\cap N_{G\Box H}[(g',y)].
\end{align*}
Hence, $(x,y)\in \mathcal{P}(G\Box H)$, which implies that $\mathcal{P}(G)\times V(H)\subseteq \mathcal{P}(G\Box H)$.
\end{proof}

\begin{theorem}
\label{th:cart-upper-bound}
For any connected graphs $G$ and $H$, $$\mu_t(G\Box H)\le \min\{(\n(G)-|\mathcal{P}(G)|)\mu_t(H),(\n(H)-|\mathcal{P}(H)|)\mu_t(G)\}.$$
\end{theorem}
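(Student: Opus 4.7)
The plan is to fix a $\mu_t(G\Box H)$-set $X$ and control $|X|$ layer by layer in $H$. For each $h\in V(H)$, I would set $X_h=\{g\in V(G):(g,h)\in X\}$ so that $|X|=\sum_{h\in V(H)}|X_h|$. The first step is to prove that $X_h$ is a total mutual-visibility set of $G$; this will give $|X_h|\le \mu_t(G)$. To this end, fix $x,y\in V(G)$. Since $X$ is a total mutual-visibility set of $G\Box H$, there exists a shortest $(x,h)$-$(y,h)$ path $P$ in $G\Box H$ whose interior avoids $X$. The length of $P$ equals $d_G(x,y)+d_H(h,h)=d_G(x,y)$, and if $P$ uses $p$ edges changing the $G$-coordinate and $q$ edges changing the $H$-coordinate, then $p+q=d_G(x,y)$ and $p\ge d_G(x,y)$, since the $G$-coordinates traversed form an $x$-$y$ walk in $G$. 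This forces $q=0$, so $P$ lies entirely in the layer $V(G)\times\{h\}$ and projects to a shortest $x$-$y$ path in $G$ whose interior vertices lie outside $X_h$. Hence $x$ and $y$ are $X_h$-visible.

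The second step is to discard the layers indexed by $\mathcal{P}(H)$. Applying Lemma~\ref{NotBypassInG-NOtInCartesian} with the factors interchanged (using $G\Box H\cong H\Box G$) yields $V(G)\times \mathcal{P}(H)\subseteq \mathcal{P}(G\Box H)\subseteq \mathcal{F}(G\Box H)$. Since no forbidden vertex can belong to any $\mu_t(G\Box H)$-set, we obtain $X_h=\varnothing$ whenever $h\in \mathcal{P}(H)$.

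Combining these two steps,
$$\mu_t(G\Box H)=|X|=\sum_{h\in V(H)\setminus \mathcal{P}(H)}|X_h|\le (\n(H)-|\mathcal{P}(H)|)\,\mu_t(G),$$
and the complementary bound $\mu_t(G\Box H)\le (\n(G)-|\mathcal{P}(G)|)\,\mu_t(H)$ follows by an identical argument with the roles of $G$ and $H$ exchanged, taking fibers over $V(G)$ instead of $V(H)$.

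The main technical point is the first step: forcing every shortest path between two vertices of a common $G$-layer to stay inside that layer. It is a standard consequence of the additivity of the Cartesian distance formula, but it is the essential hinge of the argument, because without layer-containment the projection $X_h$ would not inherit the mutual-visibility property from $X$ and the entire upper bound would collapse to the weaker one already recorded in Theorem~\ref{th:cart-bounds}.
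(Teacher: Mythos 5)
Your proof is correct and follows essentially the same route as the paper: decompose a $\mu_t(G\Box H)$-set into fibers, use Lemma~\ref{NotBypassInG-NOtInCartesian} to kill the fibers over $\mathcal{P}$-vertices, and bound each remaining fiber by the total mutual-visibility number of the other factor. The only difference is that you make explicit the layer-convexity argument showing that each fiber's trace is a total mutual-visibility set of the factor, a step the paper leaves implicit when it writes $|(\{u\}\times V(H))\cap X|\le \mu_t(H)$.
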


\begin{proof}
Let $X$ be a $\mu_t(G\Box H)$-set. By Lemma~\ref{NotBypassInG-NOtInCartesian} we deduce that, $(\mathcal{P}(G)\times V(H))\cap X = \varnothing$.
Hence,
\begin{align*}
  \mu_t(G\Box H) & = |X| = \sum_{u\in \mathcal{P}(G)} |(\{u\}\times V(H))\cap X| + \sum_{u\notin \mathcal{P}(G)} |(\{u\}\times V(H))\cap X| \\
                 & \le 0 + (\n(G)-|\mathcal{P}(G)|)\mu_t(H).
\end{align*}

Analogously we can prove that $\mu_t(G\Box H) \le (\n(H)-|\mathcal{P}(H)|)\mu_t(G)$. Therefore, the result follows.
\end{proof}

By Theorem~\ref{th:cart-upper-bound} and Proposition~\ref{pro:tmv-general-bounds-equality} (ii) we deduce the following bound.

\begin{theorem}
\label{th:cart-upper-bound-Simplicials}
Let $G$ and $H$ be two connected graphs. If $\mu_t(G) = |\mathcal{S}(G)|$ or $\mu_t(H) = |\mathcal{S}(H)|$, then  $\mu_t(G\Box H)\le \mu_t(G)\mu_t(H).$
\end{theorem}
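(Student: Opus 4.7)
The plan is to combine the two results cited in the statement, namely Theorem~\ref{th:cart-upper-bound} and Proposition~\ref{pro:tmv-general-bounds-equality} (ii), through a direct substitution. By symmetry in $G$ and $H$ it suffices to treat one of the two hypotheses, say $\mu_t(G) = |\mathcal{S}(G)|$, and then argue the other case verbatim by swapping the roles of the factors.

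First I would invoke Proposition~\ref{pro:tmv-general-bounds-equality} (ii) applied to $G$: the hypothesis $\mu_t(G)=|\mathcal{S}(G)|$ immediately yields the identity
\[
\mu_t(G)=\n(G)-|\mathcal{P}(G)|.
\]
Next I would plug this identity into the upper bound provided by Theorem~\ref{th:cart-upper-bound}, which asserts
\[
\mu_t(G\Box H)\le (\n(G)-|\mathcal{P}(G)|)\,\mu_t(H).
\]
Replacing $\n(G)-|\mathcal{P}(G)|$ by $\mu_t(G)$ gives $\mu_t(G\Box H)\le \mu_t(G)\mu_t(H)$, which is the desired conclusion.

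In the symmetric case $\mu_t(H)=|\mathcal{S}(H)|$, the same argument applied to $H$ produces $\mu_t(H)=\n(H)-|\mathcal{P}(H)|$, and the second term in the minimum of Theorem~\ref{th:cart-upper-bound} then collapses to $\mu_t(G)\mu_t(H)$. Since no further analysis is required beyond these substitutions, there is essentially no obstacle to overcome; the theorem is simply the distilled consequence of combining the structural upper bound involving $|\mathcal{P}(\cdot)|$ with the characterization of when the lower bound $|\mathcal{S}(\cdot)|$ is tight. The only point worth verifying in writing is that Theorem~\ref{th:cart-upper-bound} and Proposition~\ref{pro:tmv-general-bounds-equality} (ii) indeed apply under the connectedness assumption on the factors, which is built into the hypotheses of the present statement.
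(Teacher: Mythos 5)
Your proof is correct and follows exactly the route the paper takes: the paper derives this theorem in one line by combining Theorem~\ref{th:cart-upper-bound} with Proposition~\ref{pro:tmv-general-bounds-equality} (ii), which is precisely your substitution of $\mu_t(G)=\n(G)-|\mathcal{P}(G)|$ into the upper bound. Nothing is missing.
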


As we will show below, the bound above is tight.

\begin{theorem}{\em \cite{tian-2022+}}
\label{th:cart-tmv-G-tree}
If $T$ is tree with $n(T)\ge 3$ and $H$ is a graph with $n(H)\ge 2$, then $\mu_t(T\Box H)= \mu_t(T)\mu_t(H)$.
\end{theorem}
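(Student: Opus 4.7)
The plan is to prove the two inequalities $\mu_t(T\Box H)\le \mu_t(T)\mu_t(H)$ and $\mu_t(T\Box H)\ge \mu_t(T)\mu_t(H)$ separately, the first via the simplicial bound already established and the second by exhibiting an explicit total mutual-visibility set of the desired size.

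For the upper bound, I would observe that in any tree $T$ with $n(T)\ge 3$ the simplicial vertices are precisely the leaves: a leaf trivially has a complete one-vertex neighbourhood, whereas every non-leaf has at least two neighbours which are non-adjacent because trees are triangle-free. Combined with the corollary stating $\mu_t(T)=n_1(T)$ for trees, this yields $\mu_t(T)=|\mathcal{S}(T)|$, so the hypothesis of Theorem~\ref{th:cart-upper-bound-Simplicials} is satisfied and we conclude $\mu_t(T\Box H)\le \mu_t(T)\mu_t(H)$.

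For the matching lower bound, let $L$ denote the leaf set of $T$ and let $Y$ be a $\mu_t(H)$-set. I would propose the candidate $X=L\times Y$, which has cardinality $n_1(T)\mu_t(H)=\mu_t(T)\mu_t(H)$, and verify that it is a total mutual-visibility set of $T\Box H$ by analysing an arbitrary pair $(u_1,v_1),(u_2,v_2)$ in cases. When $u_1\ne u_2$ and the unique $u_1,u_2$-path in $T$, say $u_1=a_0,\dots,a_p=u_2$, has length $p\ge 2$, the route that first performs one $T$-step, then every $H$-step along a fixed shortest $v_1,v_2$-path in $H$, then the remaining $T$-steps has all internal vertices with first coordinate in $\{a_1,\dots,a_{p-1}\}\subseteq V(T)\setminus L$ and therefore avoids $X$. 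When $p=1$, the endpoints $u_1$ and $u_2$ are adjacent and, since $n(T)\ge 3$, they cannot both be leaves (otherwise $T=K_2$); performing all $H$-steps along the fiber of the non-leaf endpoint followed by the single $T$-step keeps the internal vertices inside a fiber of $V(T)\setminus L$ and hence outside $X$.

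The delicate remaining case is $u_1=u_2=u$, where every shortest path between $(u,v_1)$ and $(u,v_2)$ is forced to lie in the single fiber $\{u\}\times V(H)$, so the ``non-leaf highway'' in $T$ is unavailable. If $u\notin L$ the entire fiber is disjoint from $X$ and any shortest path works; if $u\in L$ one must produce a shortest $v_1,v_2$-path in $H$ whose interior avoids $Y$, and such a path exists precisely because $Y$ is a total mutual-visibility set of $H$. This is the only step in which the defining property of $\mu_t(H)$ is invoked nontrivially, and I expect it to be the main conceptual point of the argument. Once it is verified, the inequality $\mu_t(T\Box H)\ge |X|=\mu_t(T)\mu_t(H)$ follows and combines with the upper bound to give the claimed equality.
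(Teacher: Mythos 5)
Your proof is correct and follows essentially the same route as the paper: the upper bound is obtained exactly as in Theorem~\ref{th:cart-upper-bound-Simplicials} via $\mu_t(T)=|\mathcal{S}(T)|=n_1(T)$, and your explicit set $L\times Y$ is precisely the construction underlying the lower bound $\mu_{it}(T)\,\mu_t(H)\le\mu_t(T\Box H)$ of Theorem~\ref{th:cart-bounds}, which the paper simply cites (the theorem itself being presented as a special case of Theorem~\ref{th:tmv-G-independent}). The only difference is that you verify the lower-bound construction directly --- correctly using that internal vertices of tree paths are non-leaves and that the leaf set is independent --- rather than invoking the cited bound.
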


We next show a result which includes Theorem \ref{th:cart-tmv-G-tree} as a particular case. Notice that for  any tree $T$ the only $\mu_t(T)$-set is the set of leaves (simplicial vertices) and clearly it is an independent set. Moreover, there exist numerous examples of graphs, where $\mu_t(G) = |\mathcal{S}(G)|$ and $\mathcal{S}(G)$ is an independent set, and among them we have, for instance,  the corona products $G\cong G^*\odot \overline{K_n}$, for any graph $G^*$, and any graph $G'$ obtained as follows. We begin with a Hamiltonian graph $H^*$ with Hamiltonian cycle $v_0v_1\dots v_{n(H^*)-1}v_0$ and $n(H^*)$ vertices $u_0,u_1,\dots u_{n(H^*)-1}$. Then to form $G'$ we join each vertex $u_i$ with $v_i$ and $v_{i+1}$, for $i\in \{0,1,...,n(H^*)-1\}$ (where the operations with the subscripts $i$ are expressed modulo $n(H^*)$). Notice that every vertex of $H^*$ belongs to  $\mathcal{P}(G')$.

\begin{theorem}
\label{th:tmv-G-independent}
If $\mathcal{S}(G)$ is an independent set of $G$ and $\mu_t(G) = |\mathcal{S}(G)|$, then $$\mu_t(G\Box H) = \mu_t(G)\mu_t(H).$$
\end{theorem}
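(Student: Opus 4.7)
The plan is to prove both inequalities separately. The upper bound $\mu_t(G\Box H)\le \mu_t(G)\mu_t(H)$ is immediate from Theorem~\ref{th:cart-upper-bound-Simplicials}, since the hypothesis $\mu_t(G)=|\mathcal{S}(G)|$ is precisely what that theorem requires. For the lower bound, I would let $T$ be a $\mu_t(H)$-set and set $X:=\mathcal{S}(G)\times T$, so that $|X|=|\mathcal{S}(G)|\cdot|T|=\mu_t(G)\mu_t(H)$. It then suffices to show that $X$ is a total mutual-visibility set of $G\Box H$.

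To this end, fix $(g_1,h_1),(g_2,h_2)\in V(G\Box H)$. Using that $\mathcal{S}(G)$ is the unique $\mu_t(G)$-set of $G$, choose a shortest $g_1,g_2$-path $g_1=a_0,\dots,a_k=g_2$ in $G$ whose interior avoids $\mathcal{S}(G)$, and similarly a shortest $h_1,h_2$-path $h_1=b_0,\dots,b_l=h_2$ in $H$ whose interior avoids $T$. If $g_1=g_2$ or $h_1=h_2$, then the corresponding ``fiber'' path in $G\Box H$ already has interior disjoint from $X$. Otherwise, the two L-shaped shortest paths (first along the $a_i$'s then along the $b_j$'s, and vice versa) have all interior vertices outside $X$ except possibly $(g_2,h_1)$ or $(g_1,h_2)$, respectively; so one of these L-shapes works unless simultaneously $g_1,g_2\in\mathcal{S}(G)$ and $h_1,h_2\in T$.

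The main obstacle is precisely this last case, in which both endpoints lie in $X$. Here the hypothesis that $\mathcal{S}(G)$ is independent becomes crucial: since $g_1,g_2\in\mathcal{S}(G)$ are distinct and non-adjacent, $d_G(g_1,g_2)\ge 2$, so $k\ge 2$ and the vertex $a_1$ exists and lies outside $\mathcal{S}(G)$. I would then use the staircase shortest path
\[
(a_0,b_0),(a_1,b_0),(a_1,b_1),\dots,(a_1,b_l),(a_2,b_l),\dots,(a_k,b_l),
\]
whose interior vertices all have first coordinate in $\{a_1,\dots,a_{k-1}\}\subseteq V(G)\setminus\mathcal{S}(G)$ and hence lie outside $X$, completing the verification.
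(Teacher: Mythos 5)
Your proof is correct. The upper bound is handled exactly as in the paper, via Theorem~\ref{th:cart-upper-bound-Simplicials}. For the lower bound you diverge: the paper simply observes that the hypotheses force $\mathcal{S}(G)$ to be the unique $\mu_t(G)$-set, hence an \emph{independent} total mutual-visibility set, so $\mu_{it}(G)=\mu_t(G)$ and the cited bound $\mu_{it}(G)\mu_t(H)\le\mu_t(G\Box H)$ of Theorem~\ref{th:cart-bounds} gives the result in one line. You instead reprove that lower bound from scratch in this special case, exhibiting $\mathcal{S}(G)\times T$ as a total mutual-visibility set; your case analysis (fiber paths, the two L-shaped geodesics, and the staircase through $(a_1,\cdot)$ when both endpoints lie in $\mathcal{S}(G)\times T$, where independence guarantees $d_G(g_1,g_2)\ge 2$) is exactly where the independence hypothesis does its work, and it is essentially the argument underlying the Tian--Klav\v{z}ar bound. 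What your version buys is self-containedness and freedom from the technical hypotheses of Theorem~\ref{th:cart-bounds} ($n(G),n(H)\ge 2$ and $\mu_{it}\ge 1$ for both factors), which the paper's one-line proof tacitly glosses over; what the paper's version buys is brevity. Your identification of $\mathcal{S}(G)$ as the unique $\mu_t(G)$-set (via $\mathcal{S}(G)\subseteq\mathcal{C}(G)$) is also the correct justification for choosing $G$-geodesics whose interiors avoid $\mathcal{S}(G)$.
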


\begin{proof}
The result is obtained by combining the upper bound given by Theorem~\ref{th:cart-upper-bound-Simplicials} and the lower bound given by Theorem~\ref{th:cart-bounds}.
%
%
%
%
%
%
%
\end{proof}

\begin{corollary}
If $T_{1}$,$\dots$,$T_{k}$ is a family of trees of order at least three, then
$$\mu_t(T_1\Box \cdots \Box T_k)=\prod_{i=1}^k\n_1(T_i).$$
\end{corollary}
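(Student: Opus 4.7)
The plan is to induct on $k$ and apply Theorem~\ref{th:tmv-G-independent} to strip off one tree factor at a time. The base case $k=1$ is exactly the earlier corollary, which states $\mu_t(T)=\n_1(T)$ for any tree $T$ of order at least three. For the inductive step, I will write $T_1\Box\cdots\Box T_k = T_1\Box H$ where $H=T_2\Box\cdots\Box T_k$; this $H$ is connected, since each factor is connected and a Cartesian product of connected graphs is connected.

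To invoke Theorem~\ref{th:tmv-G-independent} with $G=T_1$, I need to verify that $\mathcal{S}(T_1)$ is an independent set and that $\mu_t(T_1)=|\mathcal{S}(T_1)|$. For the first, note that a vertex of a tree is simplicial exactly when it has at most one neighbor: a vertex of degree at least two in a tree has pairwise non-adjacent neighbors (otherwise a triangle would appear), so its neighborhood fails to induce a complete subgraph. Hence $\mathcal{S}(T_1)$ is precisely the set of leaves of $T_1$, and since $T_1$ is connected of order at least three, no two leaves can be adjacent (an edge between two leaves would constitute an isolated $K_2$ component). For the second, the earlier tree corollary yields $\mu_t(T_1)=\n_1(T_1)=|\mathcal{S}(T_1)|$.

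Theorem~\ref{th:tmv-G-independent} therefore produces $\mu_t(T_1\Box H)=\n_1(T_1)\cdot\mu_t(H)$, and the inductive hypothesis evaluates $\mu_t(H)=\prod_{i=2}^k\n_1(T_i)$, which multiplies to give the claim. I do not anticipate any real obstacle: the corollary is essentially an immediate iteration of Theorem~\ref{th:tmv-G-independent}, with the only substantive step being the routine verification that each tree of order at least three satisfies its hypotheses.
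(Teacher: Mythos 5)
Your proof is correct and follows exactly the route the paper intends: the corollary is stated as an immediate iteration of Theorem~\ref{th:tmv-G-independent}, using the fact that for a tree of order at least three the simplicial vertices are precisely the leaves, they form an independent set, and $\mu_t(T)=\n_1(T)$. The induction on $k$ with $H=T_2\Box\cdots\Box T_k$ is the natural formalization and introduces no gap.
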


We will now establish the following lemma, which will be one of our tools.

\begin{lemma}
\label{InC_4atMostTwo}
Let $x,x'$ be two adjacent vertices of a graph $G$ and let $y,y'$ be two adjacent vertices of a graph $H$. If $X$ is a $\mu_t(G\Box H)$-set and $(x,y)\in X$, then $(x',y')\not\in X$.
\end{lemma}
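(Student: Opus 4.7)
My plan is to exploit the $4$-cycle formed by the vertices $(x,y), (x',y), (x,y'), (x',y')$ in $G\Box H$, and to derive the conclusion by applying total mutual-visibility to the ``complementary'' pair $(x,y')$, $(x',y)$. Since $x\neq x'$ and $y\neq y'$, these four vertices are pairwise distinct, so $(x,y')$ and $(x',y)$ form a legitimate pair of vertices whose $X$-visibility must be witnessed.

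First I would compute, using the distance formula in the Cartesian product,
$$d_{G\Box H}((x,y'),(x',y))=d_G(x,x')+d_H(y',y)=1+1=2.$$
Hence every shortest $(x,y')$--$(x',y)$ path has exactly one internal vertex, and that vertex is a common neighbor of $(x,y')$ and $(x',y)$ in $G\Box H$.

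Next I would verify that the set of common neighbors of $(x,y')$ and $(x',y)$ is exactly $\{(x,y),(x',y')\}$. This is a brief case analysis based on the adjacency rule in $G\Box H$: a neighbor $(a,b)$ of $(x,y')$ must satisfy either $a=x$ with $by'\in E(H)$, or $ax\in E(G)$ with $b=y'$, and analogous conditions are required for being a neighbor of $(x',y)$. Matching these two pairs of conditions, together with $x\neq x'$ and $y\neq y'$, forces $(a,b)\in\{(x,y),(x',y')\}$.

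Finally, I would argue by contradiction: assuming $(x',y')\in X$ together with the hypothesis $(x,y)\in X$ would mean that the internal vertex of every shortest $(x,y')$--$(x',y)$ path lies in $X$, so $(x,y')$ and $(x',y)$ are not $X$-visible. This contradicts the fact that $X$ is a total mutual-visibility set of $G\Box H$, and therefore $(x',y')\notin X$. The only place requiring care is the common-neighbor enumeration, but this is a short, routine consequence of the Cartesian-product adjacency definition.
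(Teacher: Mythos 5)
Your proposal is correct and follows essentially the same argument as the paper: both identify that the only two shortest paths between $(x,y')$ and $(x',y)$ pass through $(x,y)$ and $(x',y')$ respectively, so having both of the latter in $X$ destroys the $X$-visibility of the former pair. Your explicit verification of the common-neighbor enumeration is a welcome bit of extra care but does not change the route.
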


\begin{proof}
Suppose that $(x,y),(x',y')\in X$. Since the only shortest paths between $(x',y)$ and $(x,y')$ are  $(x',y),(x',y'),(x,y')$ and $(x',y),(x,y),(x,y')$,  the vertices $(x,y)$ and $(x',y')$ are not $X$-visible, which is a contradiction.
\end{proof}

The set $\mathcal{S}(G)$ of simplicial vertices of a graph $G$ can be partitioned into true twin equivalence classes where two vertices $g,g'\in \mathcal{S}(G)$ belong to the same class if and only if they are true twins, i.e., whenever $N_G[g]=N_G[g']$.

\begin{theorem}\label{GeneralizationCompleteTimesComplete}
Let $G$ be a graph and let $\{C_1,\dots, C_k\}$ be a partition of $\mathcal{S}(G)$ into true twin equivalence classes.  If $\mu_t(G)=|\mathcal{S}(G)|$, then for any integer $n\ge 2$
$$\mu_t(G\Box K_n)=\sum_{i=1}^k \max\{|C_i|,n\}.$$
\end{theorem}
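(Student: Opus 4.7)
The plan is to establish matching upper and lower bounds, exploiting the decomposition $V(G)=\mathcal{S}(G)\sqcup\mathcal{P}(G)$ afforded by Proposition~\ref{pro:tmv-general-bounds-equality}(ii) (which applies since we assume $\mu_t(G)=|\mathcal{S}(G)|$) together with the rook-graph structure that each class $C_i$ contributes to $G\Box K_n$.

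For the upper bound, let $X$ be a $\mu_t(G\Box K_n)$-set. Lemma~\ref{NotBypassInG-NOtInCartesian} forces $X\cap(\mathcal{P}(G)\times V(K_n))=\varnothing$, so it suffices to bound $|X\cap(C_i\times V(K_n))|$ for each twin class $C_i$. The vertices of $C_i$ are pairwise true twins and hence pairwise adjacent in $G$; combined with $K_n$ being complete, Lemma~\ref{InC_4atMostTwo} rules out any pair $(u,z),(u',z')\in X$ with $u\ne u'$ in $C_i$ and $z\ne z'$ in $V(K_n)$. Hence distinct elements of $X\cap(C_i\times V(K_n))$ must agree on at least one coordinate, and a short pigeonhole argument then shows that either all first coordinates or all second coordinates must be constant. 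In either case the size is at most $\max\{|C_i|,n\}$, and summing over $i$ yields the upper bound.

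For the lower bound, I would construct an explicit total mutual-visibility set of the stated size. For each $C_i$, if $|C_i|\ge n$ fix a vertex $w^*\in V(K_n)$ and set $X_i=C_i\times\{w^*\}$; if $|C_i|<n$ fix a vertex $u_i^*\in C_i$ and set $X_i=\{u_i^*\}\times V(K_n)$. The union $X=\bigcup_i X_i$ has cardinality $\sum_i\max\{|C_i|,n\}$. To check that $X$ is a total mutual-visibility set, I would invoke the fact that no simplicial vertex of $G$ can be interior to any shortest $G$-path (since its closed neighborhood is a clique, a chord would shorten the path). Thus whenever $m=d_G(a,b)\ge 2$, every shortest $G$-path $a=a_0,\ldots,a_m=b$ has $a_1,\ldots,a_{m-1}\in\mathcal{P}(G)$, and choosing an appropriate ``switching'' index in the corresponding $G\Box K_n$-path yields internal vertices whose first coordinates all lie in $\mathcal{P}(G)$, hence outside $X$.

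The crux of the verification is the short-distance case when $a$ and $b$ lie in the same twin class $C_r$ (so $d_G(a,b)=1$) with $w_a\ne w_b$: here the only two shortest paths in $G\Box K_n$ between $(a,w_a)$ and $(b,w_b)$ each have exactly one internal vertex, namely $(b,w_a)$ or $(a,w_b)$, so I must show that at least one of these avoids $X$. This is where the specific shape of $X_r$ matters: when $|C_r|\ge n$ we have $X_r=C_r\times\{w^*\}$ and since $w_a\ne w_b$ at most one of them equals $w^*$; when $|C_r|<n$ we have $X_r=\{u_r^*\}\times V(K_n)$ and since $a\ne b$ at most one of them equals $u_r^*$. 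In either case at least one of the two candidate internal vertices lies outside $X$, closing the argument.
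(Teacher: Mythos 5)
Your proposal is correct and follows essentially the same route as the paper: the same per-class row-or-column construction for the lower bound (verified via the fact that internal vertices of shortest paths avoid $\mathcal{S}(G)$, plus the cross-structure analysis inside each rook block), and the same combination of Lemma~\ref{NotBypassInG-NOtInCartesian} and Lemma~\ref{InC_4atMostTwo} for the upper bound. The only cosmetic difference is that you spell out the pigeonhole step showing each $X\cap(C_i\times V(K_n))$ lies in a single row or column, which the paper leaves implicit.
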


\begin{proof}
Let $v\in V(K_n)$ be a fixed vertex of $K_n$ and let $u_i\in C_i$ be a representative vertex of the class $C_i$. We define the following set.
$$X=\left( \bigcup_{n\ge |C_i|}\{u_i\}\times V(K_n)\right) \bigcup \left( \bigcup_{n< |C_i|}C_i \times \{v\}\right).$$
We proceed to show that $X$ is a total mutual-visibility set of $G\Box K_n$.
To this end, we differentiate the following cases for two vertices $(g,h),(g',h')\in V(G)\times V(K_n)$.

\vspace{0,3cm}
\noindent Case 1. $g,g'\in C_i$ for some class $C_i$.
Since the subgraph of $G$ induced by  $C_i$ is complete, the subgraph of $G\Box H$ induced by $C_i\times V(K_n)$ is the Cartesian product of two complete graphs, and by the construction  of $X$ the subgraph induced by   $X_i=X\cap (C_i\times V(K_n))$ is also complete. Hence, it is readily seen that  $(g,h)$ and $(g',h')$ are $X_i$-visible, and so they are $X$-visible.

\vspace{0,3cm}
\noindent Case 2. $g\not \in  C_i$ for every class $ C_i$.
 Let $g=g_0,\dots, g_l=g'$  and  $h=h_0,\dots, h_r=h'$ be two shortest paths. If $g=g'$, then the shortest path $(g,h)=(g,h_0),\dots, (g,h_r)=(g',h')$ does not have vertices in $X$. Now, assume  $g\ne g'$.
Notice that $\mathcal{S}(G)$ is a $\mu_t(G)$-set and, by Proposition~\ref{pro:tmv-general-bounds-equality} (ii), $\mathcal{P}(G)=V(G)\setminus \mathcal{S}(G)$. Hence, the path $g=g_0,\dots, g_l=g'$ is $\mathcal{S}(G)$-visible, and   by the construction of $X$, the path
$$(g,h)=(g,h_0),\dots, (g,h_{r-1}),(g,h'),(g_1,h')\dots, (g_l,h')=(g',h')$$
is $X$-visible.

According to the two cases above, $X$ is a total mutual-visibility set of $G\Box K_n$, which implies that
$$\mu_t(G\Box K_n)\ge |X|=\displaystyle\sum_{i=1}^k \max\{|C_i|,n\}.$$

Now, let $W$ be a $\mu_t(G\Box K_n)$-set.
As mentioned above, $V(G) = \mathcal{S}(G)\cup\mathcal{P}(G)$ and, by Lemma~\ref{NotBypassInG-NOtInCartesian}, we know that $\mathcal{P}(G)\times V(K_n)\subseteq \mathcal{P}(G\Box K_n) $, which implies that
$W\cap (\mathcal{P}(G)\times V(K_n))=\varnothing$.
On the other side, since the subgraph of $G$ induced by any class $C_i$ is complete, by Lemma~\ref{InC_4atMostTwo} we can conclude that for any pair of different vertices $x,x'\in C_i$ and any pair of different vertices $y,y'\in V(K_n)$ we have that
$|W\cap \{(x,y),(x',y')\}|\le 1$ and $|W\cap \{(x,y'),(x',y)\}|\le 1$. Hence, $|W\cap C_i\times V(K_n)|\le \max\{|C_i|,n\}$ for every class $C_i$. Thus,
$$\mu_t(G\Box K_n)=\sum_{i=1}^k |W\cap( C_i\times V(K_n))|\le
 \displaystyle\sum_{i=1}^k \max\{|C_i|,n\}.$$
Therefore, the result follows.
\end{proof}

From Theorem~\ref{GeneralizationCompleteTimesComplete} we derive the following result, which was recently obtained in  \cite{tian-2022+}.

\begin{corollary}{\rm \cite{tian-2022+} }
If $m,n\ge 2$ are integers, then $\mu_t(K_m\Box K_n)=\max\{m,n\}$.
\end{corollary}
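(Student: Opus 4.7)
The plan is to derive this corollary as an immediate application of Theorem~\ref{GeneralizationCompleteTimesComplete} with $G = K_m$, so the task reduces to checking the two hypotheses of that theorem and computing the sum on its right-hand side.

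First I would verify that $\mathcal{S}(K_m) = V(K_m)$: every vertex $v$ of $K_m$ has neighborhood $V(K_m)\setminus\{v\}$, which induces a copy of $K_{m-1}$, hence every vertex is simplicial. Next, by Corollary~\ref{cor:tmv=n}(i) we have $\mu_t(K_m) = \n(K_m) = m = |\mathcal{S}(K_m)|$, so the hypothesis $\mu_t(G) = |\mathcal{S}(G)|$ of Theorem~\ref{GeneralizationCompleteTimesComplete} is satisfied.

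Then I would describe the partition of $\mathcal{S}(K_m)$ into true twin equivalence classes. For any two vertices $u,v\in V(K_m)$ we have $N_{K_m}[u] = V(K_m) = N_{K_m}[v]$, so all simplicial vertices are pairwise true twins. Consequently the partition consists of a single class $C_1 = V(K_m)$ with $|C_1| = m$, and $k = 1$.

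Finally, applying Theorem~\ref{GeneralizationCompleteTimesComplete} yields
\[
\mu_t(K_m\Box K_n) \;=\; \sum_{i=1}^{1}\max\{|C_i|, n\} \;=\; \max\{m, n\},
\]
which is the desired equality. There is no real obstacle here; the only thing to be careful about is correctly identifying that the true twin relation collapses $V(K_m)$ into a single equivalence class, so that the sum degenerates into a single $\max$ rather than a sum of several terms.
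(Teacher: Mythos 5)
Your proposal is correct and follows exactly the route the paper intends: the corollary is stated as an immediate consequence of Theorem~\ref{GeneralizationCompleteTimesComplete}, and your verification that $\mathcal{S}(K_m)=V(K_m)$, $\mu_t(K_m)=m$, and that all of $V(K_m)$ forms a single true twin class supplies precisely the missing details. The computation $\sum_{i=1}^{1}\max\{|C_i|,n\}=\max\{m,n\}$ is exactly right.
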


Based on the corollary above, one might think that the total mutual-visibility number of the Cartesian product of at least three complete graphs $K_{n_1}$, $K_{n_2}$, $\dots$, $K_{n_k}$ ($k\ge 3$) equals $\max\{n_1,n_2,\dots, n_k\}$. However, this seems to be far from the reality. To see this we consider the example $K_3\Box K_3\Box K_2$. Figure \ref{exch Cartesian} shows a total mutual-visibility set (in bold) of $K_3\Box K_3\Box K_2$ of cardinality $4>\max\{3,3,2\}$. This situation allows to think that finding $\mu_t(K_{n_1}\Box K_{n_2}\Box \cdots \Box K_{n_k})$ (in particular for Hamming graphs) is a challenging problem.

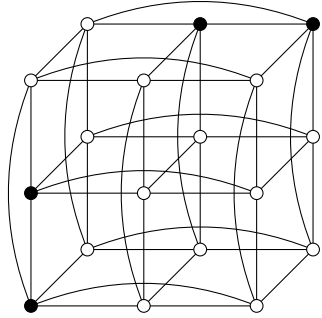
\begin{figure}[h]
\centering
\begin{tikzpicture}[transform shape, inner sep = .6mm]

\draw(0,0)--(3,0);
\draw(0,1.5)--(3,1.5);
\draw(0,3)--(3,3);
\draw(0,0)--(0,3);
\draw(1.5,0)--(1.5,3);
\draw(3,0)--(3,3);

\draw(0.75,0.75)--(3.75,0.75);
\draw(0.75,2.25)--(3.75,2.25);
\draw(0.75,3.75)--(3.75,3.75);
\draw(0.75,0.75)--(0.75,3.75);
\draw(2.25,0.75)--(2.25,3.75);
\draw(3.75,0.75)--(3.75,3.75);

\draw(0.75,0.75)--(0,0);
\draw(0.75,2.25)--(0,1.5);
\draw(0.75,3.75)--(0,3);
\draw(2.25,0.75)--(1.5,0);
\draw(2.25,2.25)--(1.5,1.5);
\draw(2.25,3.75)--(1.5,3);
\draw(3.75,0.75)--(3,0);
\draw(3.75,2.25)--(3,1.5);
\draw(3.75,3.75)--(3,3);

\draw (0,0) .. controls (-0.4,1) and (-0.4,2) .. (0,3);
\draw (1.5,0) .. controls (1.1,1) and (1.1,2) .. (1.5,3);
\draw (3,0) .. controls (2.6,1) and (2.6,2) .. (3,3);
\draw (0,0) .. controls (1,0.4) and (2,0.4) .. (3,0);
\draw (0,1.5) .. controls (1,1.9) and (2,1.9) .. (3,1.5);
\draw (0,3) .. controls (1,3.4) and (2,3.4) .. (3,3);

\draw (0.75,0.75) .. controls (0.35,1.75) and (0.35,2.75) .. (0.75,3.75);
\draw (2.25,0.75) .. controls (1.85,1.75) and (1.85,2.75) .. (2.25,3.75);
\draw (3.75,0.75) .. controls (3.35,1.75) and (3.35,2.75) .. (3.75,3.75);
\draw (0.75,0.75) .. controls (1.75,1.15) and (2.75,1.15) .. (3.75,0.75);
\draw (0.75,2.25) .. controls (1.75,2.65) and (2.75,2.65) .. (3.75,2.25);
\draw (0.75,3.75) .. controls (1.75,4.15) and (2.75,4.15) .. (3.75,3.75);

\node [draw=black, shape=circle, fill=black] (a1) at  (0,0) {} (0,0);
\node [draw=black, shape=circle, fill=black] (a2) at  (0,1.5) {} (0,0);
\node [draw=black, shape=circle, fill=white] (a3) at  (0,3) {} (0,0);
\node [draw=black, shape=circle, fill=white] (b1) at  (1.5,0) {} (0,0);
\node [draw=black, shape=circle, fill=white] (b2) at  (1.5,1.5) {} (0,0);
\node [draw=black, shape=circle, fill=white] (b3) at  (1.5,3) {} (0,0);
\node [draw=black, shape=circle, fill=white] (c1) at  (3,0) {} (0,0);
\node [draw=black, shape=circle, fill=white] (c2) at  (3,1.5) {} (0,0);
\node [draw=black, shape=circle, fill=white] (c3) at  (3,3) {} (0,0);

\node [draw=black, shape=circle, fill=white] (a'1) at  (0.75,0.75) {} (0,0);
\node [draw=black, shape=circle, fill=white] (a'2) at  (0.75,2.25) {} (0,0);
\node [draw=black, shape=circle, fill=white] (a'3) at  (0.75,3.75) {} (0,0);
\node [draw=black, shape=circle, fill=white] (b'1) at  (2.25,0.75) {} (0,0);
\node [draw=black, shape=circle, fill=white] (b'2) at  (2.25,2.25) {} (0,0);
\node [draw=black, shape=circle, fill=black] (b'3) at  (2.25,3.75) {} (0,0);
\node [draw=black, shape=circle, fill=white] (c'1) at  (3.75,0.75) {} (0,0);
\node [draw=black, shape=circle, fill=white] (c'2) at  (3.75,2.25) {} (0,0);
\node [draw=black, shape=circle, fill=black] (c'3) at  (3.75,3.75) {} (0,0);



\end{tikzpicture}
\caption{$K_3\Box K_3\Box K_2$ with total mutual-visibility set of cardinality $4$ in bold.}
\label{exch Cartesian}
\end{figure}

If  $v\in V(K_2)$ and $X$ is a $\mu_t(G)$-set, then $X\times \{v\}$  is a total mutual-visibility set of $G\Box K_2$. Hence, the lower bound $ \mu_t(G\Box K_2)\ge \mu_t(G)$ follows. Therefore, from Theorem~\ref{th:cart-bounds} we derive the following remark.
\begin{remark}
For any connected graph $G$,
$$\max\{2\mu_{it}(G),\mu_t(G)\}\le \mu_t(G\Box K_2)\le 2\mu_t(G).$$
Furthermore, if $\mu_t(G)=\mu_{it}(G)$, then $\mu_t(G\Box K_2)= 2\mu_t(G)$.
\end{remark}

From this remark we derive some open problems stated below.

\section{Concluding remarks}

In this article we have considered the total mutual-visibility number of graphs, by giving some tight bounds and closed formulae for this parameter. We have emphasized the investigation for the case of lexicographic and Cartesian product of graphs. Next, we propose some specific problems and possible research lines that can be taken as starting point for further researching on this topic.
\begin{itemize}
\item Investigate the behaviour of the total mutual-visibility number for the case of strong product graphs, direct product graphs,  generalized Sierpi\'{n}ski graphs and Hamming graphs (with emphasis on hypercubes).
\item Investigate how $\mu_t(G)$ is related to parameters other than $\diam(G)$ and $\gamma_c(G)$.
\item Characterize the vertices of a graph belonging to $\mathcal{C}(G)\setminus \mathcal{S}(G)$ and derive consequences of these characterizations.
\item Characterize the vertices of a graph belonging to $\mathcal{F}(G)\setminus \mathcal{P}(G)$ and derive consequences of these characterizations.
\item For connected graphs, whose complement is connected, study the existence of Nordhaus-Gaddum type relations.
\item Characterize the graphs $G$ with $\mu_t(G\Box K_2)=\mu_t(G).$
\item Characterize the graphs $G$ with $\mu_t(G\Box K_2)=2\mu_{it}(G).$
\item Characterize the graphs $G$ with $\mu_t(G\Box K_2)=2\mu_{t}(G).$
\item Study the complexity of computing $\mu_t(G)$.
\end{itemize}

\section*{Acknowledgements}

Dorota Kuziak was partially supported by the Spanish Ministry of Science and Innovation through the grant PID2019-105824GB-I00. Moreover, this investigation was
developed while the first author (Dorota Kuziak) was making a temporary stay at the Rovira i Virgili University supported by the program ``Ayudas para la recualificaci\'on del sistema universitario espa\~{n}ol para 2021-2023, en el marco del Real Decreto 289/2021, de 20 de abril de 2021''.

\section*{Declaration of interests}

The authors declare that they have no known competing financial interests or personal relationships that could have appeared to influence the work reported in this paper.

\section*{Data availability}

Our manuscript has no associated data.


\begin{thebibliography}{99}

\bibitem{aljohani-2018a}
  A.~Aljohani, G.~Sharma,
  Complete visibility for mobile robots with lights tolerating faults,
  Int.\ J.\ Netw.\ Comput.\ 8 (2018) 32--52.

\bibitem{bhagat-2020}
  S.~Bhagat,
  Optimum algorithm for the mutual visibility problem,
  Lecture Notes Comp.\ Sci.\ 12049 (2020) 31--42.

\bibitem{cicerone-2023}
  S.~Cicerone, G.~Di Stefano, S.~Klav\v{z}ar,
  On the mutual-visibility in Cartesian products and in triangle-free graphs,
  Appl.\ Math.\ Comput.\ 438 (2023) 127619.

\bibitem{cicerone-2022+}
  S.~Cicerone, G.~Di Stefano, S.~Klav\v{z}ar, I.~G.~Yero,
  Mutual-visibility in strong products of graphs via total mutual-visibility,
  arXiv:2210.07835v1 [math.CO] (14 Oct 2022).

\bibitem{diluna-2017}
  G.~A.~Di Luna, P.~Flocchini, S.~G.~Chaudhuri, F.~Poloni, N.~Santoro, G.~Viglietta,
  Mutual visibility by luminous robots without collisions,
  Inf.\ Comput.\ 254 (2017) 392--418.

\bibitem{diStefano-2022}
  G.~Di Stefano,
  Mutual visibility in graphs,
  Appl.\ Math.\ Comput.\ 419 (2022) 126850.

\bibitem{ghorbani-2021}
  M.~Ghorbani, H.R.~Maimani, M.~Momeni, F.~Rahimi-Mahid, S.~Klav\v{z}ar, G.~Rus,
  The general position problem on Kneser graphs and on some graph operations,
  Discuss.\ Math.\ Graph Theory 41 (2021) 1199--1213.

\bibitem{hammack-2011}
  R.~Hammack, W.~Imrich, S.~Klav\v{z}ar,
  Handbook of Product Graphs. 2nd ed.
  CRC Press, Boca Raton, FL, 2011.


\bibitem{klavzar-2021-b}
  S.~Klavzar, D.~Kuziak, I.~Peterin, I.~G.~Yero,
  A Steiner general position problem in graph theory,
  Comput. Appl. Math. 40(6) (2021) 1--15.

\bibitem{klavzar-2018}
  S.~Klav\v{z}ar, P.~Manuel,
  A general position problem in graph theory,
  Bull.\ Aust.\ Math.\ Soc.\ 98 (2018) 177--187.

\bibitem{klavzar-2021}
  S.~Klav\v{z}ar, B.~Patk\'{o}s, G.~Rus, I.G.~Yero,
  On general position sets in Cartesian products,
  Results Math.\ 76 (2021) 123.

\bibitem{korner-1995}
  J.~K\"orner,
  On the extremal combinatorics of the Hamming space,
  J.\ Combin.\ Theory Ser.\ A 71 (1995) 112--126.
	
\bibitem{patkos-2020}
  B.~Patk{\'o}s,
  On the general position problem on Kneser graphs,
  Ars Math.\ Contemp.\ 18 (2020) 273--280.
	
\bibitem{tian-2021a}
  J.~Tian, K.~Xu,
  The general position number of Cartesian products involving a factor with small diameter,
  Appl.\ Math.\ Comp.\ 403 (2021) 126206.

\bibitem{tian-2022+}
  J.~Tian, S.~Klav\v{z}ar,
  Graphs with total mutual-visibility number zero and total mutual-visibility in Cartesian products,
  arXiv:2212.07193v1 [math.CO] (14 Dec 2022).

\bibitem{tian-2021}
  J.~Tian, K.~Xu, S.~Klav\v{z}ar,
  The general position number of the Cartesian product of two trees,
  Bull.\ Aust.\ Math.\ Soc.\ 104 (2021) 1--10.
	
\bibitem{ullas-2016}
  S.~V.~Ullas Chandran, G.~J.~Parthasarathy,
  The geodesic irredundant sets in graphs,
  Int.\ J.\ Math.\ Combin.\  4 (2016) 135--143.






\end{thebibliography}
\end{document}